\newcommand{\E}{{\sf e}}
\newcommand{\V}{{\sf v}}
\newcommand{\W}{{\sf w}}
\newcommand{\X}{{\sf x}}
\newcommand{\Y}{{\sf y}}
\newcommand{\Z}{\mathbb{Z}}
\newcommand{\R}{\mathbb{R}}
\newcommand{\N}{\mathbb{N}}
\newcommand{\std}{{\sf std}}
\newcommand{\muL}{\mu\raisebox{-3pt}{\scriptsize{\ensuremath L}}}
\newcommand{\normL}[1]{\| #1 \| \raisebox{-3pt}{\scriptsize{\ensuremath L}}}
\newcommand{\muTL}{\mu\raisebox{-3pt}{\scriptsize{\ensuremath TL}}}
\newcommand{\normTL}[1]{\| #1 \| \raisebox{-3pt}{\scriptsize{\ensuremath TL}}}
\definecolor{thingie}{rgb}{.75,.4,.75}
\definecolor{brown}{rgb}{.545,.27,.075}
\DeclareMathOperator{\Cube}{Cube}
\DeclareMathOperator{\Orth}{Orth}
\DeclareMathOperator{\Sphere}{Sphere}
\DeclareMathOperator{\vol}{Vol}
\DeclareMathOperator{\AD}{\hat{E}}
\newcommand{\stimes}{\mspace{1mu}}
\theoremstyle{plain}
\newtheorem{theorem}{Theorem}
\newtheorem{Thm}[theorem]{Theorem}
\newtheorem{proposition}[theorem]{Proposition}
\newtheorem{prop}[theorem]{Proposition}
\newtheorem{lemma}[theorem]{Lemma}
\newtheorem{corollary}[theorem]{Corollary}
\newtheorem{conjecture}[theorem]{Conjecture}
\newtheorem*{sphere-asymp}{Theorem \ref{sphere-asymp}}
\newtheorem*{ball-asymp}{Corollary \ref{ball-asymp}}
\theoremstyle{definition}
\newtheorem{remark}[theorem]{Remark}
\newtheorem{definition}[theorem]{Definition}
\begin{document}

\title{Statistical hyperbolicity in groups}
\author[Duchin Leli\`evre Mooney]{Moon Duchin, Samuel Leli\`evre, and Christopher Mooney}
\date{\today}

\begin{abstract}
In this paper, we introduce a geometric
statistic called the {\em sprawl} of a group with respect to a generating set, based on
the average distance in the word metric between pairs of words of equal length.  The
sprawl  quantifies a certain obstruction to hyperbolicity.  Group presentations with maximum
sprawl (i.e., without this obstruction) are called {\em statistically hyperbolic}.
We first relate sprawl to curvature and show that nonelementary hyperbolic groups are statistically hyperbolic,
then give some results for products and for certain solvable groups.
In free abelian groups, the word metrics asymptotically approach norms induced by convex polytopes, 
causing the study of sprawl to 
reduce to a problem in convex geometry.  
We  present an algorithm that computes sprawl exactly for any generating set, thus quantifying the 
failure of various presentations of $\Z^d$ to be hyperbolic. 
This leads to a conjecture about the extreme values, with a connection to the classic Mahler conjecture.
\end{abstract}

\maketitle

\section{Introduction}

We will define and study  a new geometric 
statistic for groups in this paper, called the {\em sprawl} of a group (with respect to a
generating set).  Sprawl measures the average distance between pairs of points on the
spheres in the word metric, normalized by the radius, as the spheres get large.  
This gives a numerical measure of the asymptotic shape of spheres that can be studied for arbitrary 
finitely generated groups and locally finite graphs.

To be precise, let
$$E(G,S) := \lim_{n\to\infty}\frac{1}{|S_n|^2}\sum_{x,y\in S_n} \frac 1n \ d(x,y),$$ 
provided this limit exists.  Note that since $0\le d(x,y)\le 2n$, the value is always
between 0 and 2.  By way of interpretation, note that $E=2$ means that one can almost
always pass through the origin when traveling between any two points on the sphere without
taking a significant detour.  (The name is intended to invoke urban sprawl:  a higher value
means a lack of significant shortcuts between points on the periphery of the ``city.")

As we will see, this statistic is not quasi-isometry invariant but nonetheless captures
interesting features of the large-scale geometry, to be developed in \S\ref{hyp}.
Sprawl has connections to other 
geometric statistics such as divergence, almost-convexity, and discrete Ricci curvature.
After explaining why this statistic detects curvature properties, 
we  show below that non-elementary
hyperbolic groups always have $E(G,S)=2$ for any generating set, so we can think of $2-E$
as quantifying an obstruction to hyperbolicity in groups.  We give some results about sprawl for 
non-hyperbolic groups, including product groups and some solvable examples (lamplighter groups).

\subsection*{Free abelian groups and convex geometry}

For free abelian groups, there are particularly clear results on the asymptotic shape of spheres and the 
distribution of their points that allow us to compute the sprawl.  
As we will review below in \S\ref{limit-measure}, the word metrics on $\Z^d$
are close at large scale to certain norms, and the points of the spheres are distributed in a way that tends 
to a limit measure on the unit sphere in the norm.  
This allows us to replace an asymptotic computation on large discrete spheres 
by a finite computation: integrating average distance on a polytope against an appropriate measure.
We give an algorithm for performing this calculation for arbitrary $(\Z^d,S)$ in \S\ref{cutline}.
Though we can compute sprawl exactly for any finite presentation of $\Z^d$,  it is still an interesting problem 
to find the extremal values over all generating sets.   That is, we are studying a group statistic that depends on 
the choice of generators, but how much can it vary?  
This becomes a (possibly hard) problem in convex geometry, which we will study below.  

\begin{definition}
A {\em convex body} is a convex set in $\R^d$ with interior.    
A {\em perimeter} is the boundary of a centrally symmetric convex body in $\R^d$.
\end{definition}
\noindent (To emphasize this point: 
we are using the word ``perimeter" in a special way, which includes the assumption of 
central symmetry.  Accordingly, we assume that our generating sets $S$ for $\Z^d$ are symmetric, so that $S=-S$.)

We will show that a generating set $S$ for $\Z^d$ induces a perimeter $L$ in a very simple way ($L$ is just 
the boundary of the convex hull of $S$ in $\R^d$) and that the sprawl $E(\Z^d,S)=E(L)$ depends only on 
$L$.  Furthermore $E(L)=E(TL)$ for linear transformations $T$, so
sprawl gives an affine geometric invariant:  average distance between two points on the perimeter,
where both the distance and the measure have natural intrinsic definitions with respect to the shape.
We conjecture that the cube and the sphere are the extreme shapes in every dimension, and we give
some rigorous evidence for that in \S\ref{plane}-\ref{high-rank}.
This would mean, for example, that over all generating sets for $\Z^2$, the values achieved by sprawl 
are pinched between $4/\pi \approx 1.273$ and $4/3\approx 1.333$.    
This extremization problem 
resembles the well-known Mahler conjecture in convex geometry, a parallel  developed
in the last section below.

\subsection*{Acknowledgments}

We thank Alex Eskin, Ralf Spatzier, and Greg Kuperberg.
The first author is 
partially supported by NSF grant DMS-0906086, 
the second author is partially supported by ANR grants 06-BLAN-0038
and Project Modunombres, and 
 the third author is partially 
supported by NSF grant RTG-0602191.

\section{Hyperbolic groups and statistically hyperbolic groups}\label{hyp}

In a graph, let us adopt the convention that for a real number $r\ge 0$, the
notation $S_r$ denotes the metric sphere of radius $\lfloor r \rfloor$.  We will study the
Cayley graph as the metric model of a group, adopting the convention that the points of
our metric space are the vertices (that is, the elements of the group), endowed with the
distance induced by the edges (the word metric).  We will write
$\beta(r):=\#B_r(e)$ to denote the number of group elements in the closed ball of radius
$r$ about the identity (or by translation-invariance, about any other center) in the
group.

\subsection{Hyperbolicity}

A metric space is called {\em $\delta$-hyperbolic} (or just {\em hyperbolic}, without
specifying a value $\delta$) if every geodesic triangle has the property that each side is
contained in the $\delta$-neighborhood of the union of the other two sides.  In such a
space, suppose two geodesic rays share a common endpoint.  Then if they become separated
by $2\delta$ at time $t_0$, they must subsequently diverge completely: the two subrays
after this separation can be concatenated to form a complete quasigeodesic, because for
$t>t_0$, any geodesic segment connecting $\gamma_1(t)$ and $\gamma_2(t)$ must return to a
$2\delta$-neighborhood of $\gamma_i(t_0)$.  This means that the distance between
$\gamma_1(t)$ and $\gamma_2(t)$ is at least $2(t-t_0-\delta)$.  Since we have strong
estimates on the distance after the rays stop fellow-traveling, our task for hyperbolic
groups will be to get quantitative control of the fellow-traveling.

To illustrate the issues involved in finding the sprawl of a group, 
first consider the free (nonabelian) group $F_2$ with its
standard generating set.  (Here and from now on, $\std$ will denote the standard
generating set for a group).  The Cayley graph is a $4$-regular tree, and to evaluate
the average on the sphere directly, one forms a finite sum by fixing one point on the
sphere and then counting the other points of $S_n$ at various distances from the first:
$$\sum_{y\in S_n}d(x,y) = \frac 34 (2n) + \frac 14 \frac 23 (2n-2) + \frac 14 \frac 13 \frac 23 (2n-4) +
\frac 14 \frac 13 \frac 13 \frac 23 (2n-6) + \cdots  + 0.$$
As $n\to\infty$, this can be evaluated using a geometric sum, and one computes in this way
that $E(F_2,\std)=2$.  This argument, however, is sensitive to the choice of generating
set.  What would happen for some other generating set?  Does the $\delta$-hyperbolicity of the model 
space suffice?  The answer is ``No'' in general.
One can easily construct trees with sprawl any number between 0 and 2, trees where sprawl does
not exist, and trees where sprawl depends on basepoint.  These trees are highly nonhomogeneous
and are not quasi-isometric to any group.

\begin{remark}[Sprawl and classical curvature]
Moving beyond locally finite graphs and groups, we can define the sprawl for metric
spaces that have natural measures on spheres. Instead of counting measure one may take Hausdorff measure
in the appropriate dimension, for example.  Thus for a space and measure $(X,\mu)$, 
we can write 
$$E(X):= \lim_{r\to\infty} \frac{1}{\mu(S_r)^2} \int_{S_r \times S_r} \frac 1r d(x,y) \  d\mu^2.$$

One can quickly show that the hyperbolic plane (and 
thus hyperbolic space of any dimension) has $E=2$:  for two rays making angle $\theta$ at
their common basepoint,
$d(\gamma_1(t),\gamma_2(t))\ge 2t - c(\theta)$, where $c(\theta)$ is a constant depending on $\theta$.

Indeed, it is not hard to identify a relationship between sprawl and curvature:  if
$E_r$ is defined to be the average distance between pairs of points on $S_r$
and $M_\kappa$ is the model space of constant sectional curvature
$\kappa$, it is easily observed 
that for every fixed value of  $r$, the values $E_r(M_\kappa)$ are strictly decreasing in $\kappa$ 
(taking $\kappa\le \pi^2r^2$ so that $S_r$ is non-empty).

However, a $\delta$-hyperbolic space, indeed even a
tree, need not have $E=2$, and exponential growth of balls or spheres does not suffice.
For instance, consider modifying the four-regular tree by choosing one axis and modifying the degree at
each vertex in that axis as a function of distance from the origin.  
Examples constructed in this way can achieve all values $0\le E\le 2$, and can also have
$E$ not exist or depend on basepoint.  Thus to prove that hyperbolic groups have maximal
sprawl, it is essential to make use of the homogeneity guaranteed by a transitive group
action.  We will use this by appealing to a strong result of Michel Coornaert giving definite exponential growth 
(not just a growth rate but furthermore a bound on the coefficients) for hyperbolic groups.
\end{remark}

\begin{remark}[Divergence, almost-convexity, discrete Ricci curvature]
Recall that sprawl is measured by
computing the distances between pairs of points $x,y\in S_n$, then taking the average and letting $n\to\infty$.
At least three other geometric statistics also study the geometry of pairs of points in the sphere.
\begin{itemize}
\item {\em Divergence} is measured by 
minimizing the length of paths between $x,y\in S_n$ such that the path lies outside of $B_n$, 
then taking the sup and letting $n\to\infty$.
This is widely studied for groups, for instance in \cite{gersten1,gersten2,kap-leeb,dr}.  
\item {\em Almost-convexity} for groups is measured by 
minimizing the length of paths between $x,y\in S_n$ such that the path lies inside of $B_n$.
This was defined by Cannon in \cite{cannon} and further explored in many papers, such as 
\cite{her-mei,eld-her,cle-tab}.
\item Ricci curvature for manifolds is defined by considering infinitesimal spheres at a pair of basepoints, 
and measuring the average distances between corresponding points on the spheres.  If that distance is greater
than the distance between basepoints, then the curvature is negative; if smaller, then the curvature is positive; and
if equal, then the curvature is zero.  {\em Discrete Ricci curvature} mimics this construction in a manner usable 
for groups by measuring distances between corresponding points in metric spheres at different basepoints.
This was defined by Yann Ollivier in \cite{olliv1} and compared to optimal transport
definitions of C\'edric Villani and coauthors in \cite{oll-vil}.
\end{itemize}
Thus the definition of sprawl gives it a family resemblance to other synthetic curvature 
conditions that have already proved useful.  
\end{remark}

Recall that a hyperbolic group is called {\em elementary} if it is finite or has a
finite-index cyclic subgroup.  

\begin{Thm}\label{hyperbolic}
Let $G$ be a non-elementary hyperbolic group.  Then $E(G,S)=2$ for any finite
generating set $S$.  (That is, every presentation is statistically hyperbolic.)
\end{Thm}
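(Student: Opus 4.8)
The plan is to show that the trivial upper bound $E(G,S)\le 2$ (which holds because $d(x,y)\le 2n$ for $x,y\in S_n$) is actually attained, and to do this by controlling Gromov products. Writing $(x|y)_e=\tfrac12(|x|+|y|-d(x,y))$, for $x,y\in S_n$ we have $d(x,y)=2n-2(x|y)_e$ with $0\le(x|y)_e\le n$, so
$$\frac{1}{|S_n|^2}\sum_{x,y\in S_n}\frac1n\,d(x,y)\;=\;2-\frac{2}{n\,|S_n|^2}\sum_{x,y\in S_n}(x|y)_e .$$
Thus it suffices to prove that the averaged Gromov product $\tfrac{1}{n|S_n|^2}\sum_{x,y}(x|y)_e$ tends to $0$. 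Since $(x|y)_e\ge 0$, for any fixed $\epsilon>0$ I would split off the pairs with $(x|y)_e<\epsilon n$, using $(x|y)_e\le n$ on the rest, to get
$$\frac{1}{n|S_n|^2}\sum_{x,y\in S_n}(x|y)_e\;\le\;\epsilon\;+\;\frac{\#\{(x,y)\in S_n\times S_n:(x|y)_e\ge\epsilon n\}}{|S_n|^2},$$
so the theorem reduces to showing that this fraction of ``bad pairs'' tends to $0$ for every $\epsilon>0$; combined with $E\le 2$ this simultaneously yields existence of the limit.

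The next step uses $\delta$-thinness to show a bad pair is encoded by very little data. Fix once and for all a geodesic from $e$ to each group element, put $k=\lfloor\epsilon n\rfloor\le n$, and suppose $(x|y)_e\ge\epsilon n\ge k$. Let $w=w(x)$ be the point at distance $k$ from $e$ on the chosen geodesic to $x$; then $(w|x)_e=k$, and the four-point inequality gives $(w|y)_e\ge\min\{k,(x|y)_e\}-\delta=k-\delta$, which unwinds to $d(w,x)=n-k$ and $n-k\le d(w,y)\le n-k+2\delta$. Hence $(x,y)\mapsto(w(x),x,y)$ injects the set of bad pairs into $\bigcup_{w\in S_k}\{w\}\times S_{n-k}(w)\times B_{n-k+2\delta}(w)$, and by vertex-transitivity of the Cayley graph
$$\#\{(x,y)\in S_n\times S_n:(x|y)_e\ge\epsilon n\}\;\le\;|S_k|\cdot|S_{n-k}|\cdot\beta(n-k+2\delta).$$

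Finally I would invoke Coornaert's growth theorem: since $G$ is non-elementary hyperbolic there are $\lambda>1$ and $0<c_1\le c_2$ with $c_1\lambda^m\le|S_m|\le\beta(m)\le c_2\lambda^m$ for all $m$. Then the bad count is at most $c_2^3\lambda^{2\delta}\cdot\lambda^{k}\lambda^{n-k}\lambda^{n-k}=c_2^3\lambda^{2\delta}\lambda^{2n-k}$, while $|S_n|^2\ge c_1^2\lambda^{2n}$, so the bad fraction is $O(\lambda^{-k})=O(\lambda^{-\epsilon n})\to 0$. Substituting back, $\liminf_n\big(|S_n|^{-2}\sum_{x,y}\tfrac1n d(x,y)\big)\ge 2-2\epsilon$ for every $\epsilon>0$, and with the trivial $\limsup\le 2$ the limit exists and equals $2$. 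I expect the only genuinely delicate ingredient to be the growth input, and within it the lower bound $|S_m|\ge c_1\lambda^m$ on \emph{spheres} rather than balls (equivalently, that spheres carry a fixed proportion of the ball's exponential growth): this is exactly where non-elementarity and the homogeneity of the Cayley graph are indispensable — a general $\delta$-hyperbolic space, even a tree, can fail to have $E=2$ — and it is precisely what Coornaert's theorem, together with the standard passage from balls to spheres for non-elementary hyperbolic groups, supplies. Everything else is routine bookkeeping with the thin-triangle inequality.
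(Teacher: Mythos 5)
Your argument is correct and is essentially the paper's own proof in Gromov-product clothing: both isolate, for a cut-depth $\rho n$ (your $\epsilon n$), the vanishing fraction of pairs whose geodesics fellow-travel past that depth, bound the distance of all remaining pairs below by roughly $2(1-\rho)n$ using $\delta$-hyperbolicity, and invoke Coornaert's definite exponential growth to show the fellow-traveling pairs have vanishing proportion. The only differences are organizational --- you sum over branch points $w\in S_{\lfloor\epsilon n\rfloor}$ where the paper fixes $x$ and bounds the count of fellow-travelers by $|B_{2\delta}|\cdot|S_{n-\lfloor\rho n\rfloor}|$ --- and you correctly flag the lower bound on sphere (not just ball) cardinalities from Coornaert as the one non-routine input.
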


\begin{proof}
Recall that $z$ is said to be (metrically) {\em between} $x$ and $y$ if
$d(x,z)+d(z,y)=d(x,y)$.  A set is between two other sets if there exists a triple of
points, one from each of the sets, satisfying the betweenness condition.

Choose any \(0<\rho<1\) and $x\in S_n$, and let $x'$ be an arbitrary point on $S_{\rho n}$
between $e$ and $x$.  We need to bound the number of $w\in S_n$ such that
$B_{2\delta}(x')$ is between $e$ and $w$.  But if $w'$ is a point in $S_{\rho n}$ between
$e$ and $w$, then $d(w',w)= n- \lfloor \rho n \rfloor$.  That means that the number of
such $w$ is overcounted by $|B_{2\delta}|\cdot |S_{n-\lfloor \rho n \rfloor }|$.

For every point $v$ of $S_n$ which is not of this kind, $d(x,v)\ge 2(n-\lfloor\rho
n\rfloor -\delta)\ge 2(n-\rho n - \delta)$ because the geodesics from the identity to $x$
and to $v$ have $2\delta$-diverged by time $\lfloor\rho n\rfloor$.  Thus,
\begin{equation}\tag{$\star$}\label{hyper}
\sum_{x,y\in S_n} d(x,y) \ge 2(n-\rho n -\delta)
\left( |S_n| - |B_{2\delta}|\cdot |S_{n-\lfloor \rho n \rfloor}| \right) \cdot |S_n|.
\end{equation}

Now we make use of the homogeneity.  Coornaert proved in \cite{coornaert} that for every
non-elementary hyperbolic group with fixed generating set,
there are bounded coefficients of exponential growth:
\begin{equation}\tag{$\dagger$}\label{coornaert}
\exists c_1,c_2>0, \ \omega>1 \qquad \hbox{s.t.} \qquad 
c_1 \omega^n \le \beta(n) \le c_2\omega^n
\qquad \forall n\in\N.
\end{equation}
 It follows from these inequalities that
$$\frac{|S_{n-\lfloor \rho n \rfloor}|}{|S_n|} 
= \frac{\beta(n- \lfloor \rho n  \rfloor )-\beta(n- \lfloor \rho n \rfloor  -1)}{\beta(n)-\beta(n-1)}\to 0$$ 
as $n\to \infty$, which together with \eqref{hyper} gives us
$$E(G,S) = \lim_{n\to\infty}\frac{\sum d(x,y)}{n|S_n|^2}  \ge 2(1-\rho).$$
Since $0<\rho<1$ was arbitrary, this means $E=2$.\end{proof}

To quickly clarify the necessity for the non-elementary hypothesis:  for $G=\Z$ and
any finite generating set, the spheres of large radius are divided into a positive part
and a negative part, each of uniformly bounded diameter.  Thus a pair of points has
bounded distance with probability $1/2$ and distance boundedly close to $2n$ with
probability $1/2$.  This gives $E(\Z,S)=1$ for all finite generating sets $S$.

\subsection{Some statistically hyperbolic groups and spaces}

Here we exhibit several examples of non-hyperbolic groups with statistically hyperbolic presentations.
We first consider groups that are direct products with a hyperbolic factor, and then use the results on 
products to consider Diestel-Leader graphs.

Let us say that a based space $(H,h_0)$ has {\em definite exponential growth} if 
the growth function $\beta(n)$ of balls of radius $k$ centered at $h_0$ in $H$ satisfies \eqref{coornaert}.
Given a sequence of finite sets $A_n$, we will say that  \textit{almost all} points of $A_n$ satisfy
a property $(P)$, or that the property has {\em full measure}, 
if the subset of elements satisfying $(P)$ has proportion tending to one as $n\to\infty$.

The subtlety in analyzing products is that the sphere of radius $n$ projects to not a sphere but to a ball
in each factor.  Thus we need estimates for distances when points are on spheres of different radii;
we can use definite exponential growth in one factor to get control on the difference in radius (so that most
of the projection is in an annulus $A_n$), and then use hyperbolicity to get the distance estimates.  We also need to know
that spheres in these annuli are {\em evenly covered} by which we mean that
there is a function $f_n:\N\to\N$ such that $\#\left(\pi^{-1}(h)\cap S_n^X\right)=f_n(|h|_H)$ for almost all $h\in A_n^H$.

In the following technical lemma, the reader should imagine that $H$ is a direct factor of $X$ and that $\pi:X\to H$
is coordinate projection.  Recall that a {\em semi-contraction} is a distance non-increasing map.

\begin{lemma}[Annulus lemma]
Let $(X,x_0)$ and $(H,h_0)$ be  based graphs, suppose $H$ is $\delta$-hyperbolic with definite exponential growth,
and fix any $0<\rho<1$.
Consider the annulus $A_n^H=B_n^H(h_0)\setminus B_{\rho n-1}^H(h_0)$ in $H$ and 
the sphere $S_n^X=S_n^X(x_0)$ in $X$. 
Let $\pi:(X,x_0)\to(H,h_0)$ be a semi-contraction, mapping almost all points of $S_n^X$ into $A_n^H$ such that
spheres in $A_n$ are evenly covered.
Then
$$\liminf_{n\to\infty}\frac{1}{|S_n|^2} \sum_{x,y\in S_n} \frac1n d(x,y) \ge 2\rho.$$
\end{lemma}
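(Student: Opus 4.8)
The plan is to mimic the structure of the proof of Theorem~\ref{hyperbolic}, but working upstairs in $X$ while controlling distances through the semi-contraction $\pi$. Fix $0<\rho<1$. The key point is that for a pair $x,y\in S_n^X$ whose images $\pi(x),\pi(y)$ lie in the annulus $A_n^H$, the distance $d_X(x,y)$ is bounded below by the hyperbolic estimate downstairs \emph{only if} the geodesics from $h_0$ to $\pi(x)$ and from $h_0$ to $\pi(y)$ have already $2\delta$-diverged by time roughly $\rho n$. So first I would, for a point $x\in S_n^X$ with $x^H:=\pi(x)\in A_n^H$, pick a point $x'$ on the sphere $S_{\lfloor \rho n\rfloor}^H$ lying between $h_0$ and $x^H$, and estimate the number of ``bad'' $y\in S_n^X$ for which $B_{2\delta}^H(x')$ is between $h_0$ and $\pi(y)$. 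As in the theorem, such $y$ have $\pi(y)$ confined to a set of size at most $|B_{2\delta}^H|\cdot |S^H_{n-\lfloor\rho n\rfloor}|$ (using $|x^H|_H\le n$); but now to count the $y$ themselves we must pull this back through $\pi$, and this is exactly where even covering of the spheres in $A_n$ enters: the fiber over each $h\in A_n^H$ has size $f_n(|h|_H)$, so the number of bad $y$ is controlled by $|B_{2\delta}^H|$ times $\max_k f_n(k)$ times $|S^H_{n-\lfloor\rho n\rfloor}|$, while $|S_n^X|$ is comparable to $(\min_k f_n(k))$ times (a suitable portion of) $\sum_{h\in A_n^H} 1$.

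Second, for every ``good'' $y$ — meaning $\pi(y)\in A_n^H$ and $B_{2\delta}^H(x')$ is not between $h_0$ and $\pi(y)$ — the two geodesics in $H$ from $h_0$ to $x^H$ and to $\pi(y)$ have $2\delta$-diverged by time $\lfloor\rho n\rfloor$, so $d_H(x^H,\pi(y))\ge 2(\lfloor \rho n\rfloor - \delta) \ge 2(\rho n - 1 - \delta)$ by the fellow-traveling estimate recalled in \S\ref{hyp}. Since $\pi$ is a semi-contraction, $d_X(x,y)\ge d_H(\pi(x),\pi(y))\ge 2(\rho n - 1 - \delta)$. This yields an inequality of exactly the shape of $(\star)$:
$$\sum_{x,y\in S_n^X} d_X(x,y) \ \ge\ 2(\rho n - 1 - \delta)\,\bigl(|S_n^X| - (\text{bad count})\bigr)\cdot |S_n^X|,$$
where I restrict the outer sum to the full-measure set of $x$ with $\pi(x)\in A_n^H$, which costs nothing in the $\liminf$ since that set has proportion tending to $1$.

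Third, I would show the bad count is $o(|S_n^X|)$. Dividing the bad-count bound by $|S_n^X|$, the fiber sizes $f_n$ partly cancel; the residual obstruction is that $\max_k f_n(k)/\min_k f_n(k)$ need not be bounded, so the clean way is to observe that the bad $y$ have $\pi(y)$ in $S^H_{n-\lfloor\rho n\rfloor}$-many classes, hence the bad count is at most $|B_{2\delta}^H|$ times $\#\{y\in S_n^X : \pi(y)\in S^H_{\le n-\lfloor\rho n\rfloor}\}$, and then compare $\#\pi^{-1}(S^H_{n-\lfloor\rho n\rfloor})\cap S_n^X$ with $\#\pi^{-1}(A_n^H)\cap S_n^X = (1-o(1))|S_n^X|$ \emph{using even covering to make the ratio equal to the downstairs ratio} $|S^H_{n-\lfloor\rho n\rfloor}|/|A_n^H|$, which $\to 0$ by definite exponential growth of $H$ exactly as in the Coornaert step of Theorem~\ref{hyperbolic} (the annulus $A_n^H$ has size comparable to $\beta_H(n)$, which dominates the thin sphere $S^H_{n-\lfloor\rho n\rfloor}$). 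Plugging $\text{bad count} = o(|S_n^X|)$ into the displayed inequality and dividing by $n|S_n^X|^2$ gives $\liminf_n \frac1{|S_n^X|^2}\sum \frac1n d_X(x,y) \ge 2\rho$, as $n\to\infty$; since this holds for the chosen $\rho$, the proof is complete.

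The main obstacle I anticipate is the bookkeeping around even covering: even covering is only assumed for \emph{almost all} $h\in A_n^H$, and the bad $y$ project into a sphere that is \emph{not} inside $A_n^H$ (its radius $n-\lfloor\rho n\rfloor$ is below the inner radius $\rho n - 1$ of the annulus), so one cannot directly invoke even covering on the fibers over the bad images. The fix is to bound the bad count crudely by $|B_{2\delta}^H|$ times the number of $y\in S_n^X$ whose image has norm $\le n-\lfloor\rho n\rfloor$ — but such $y$ lie \emph{outside} the full-measure set $\pi^{-1}(A_n^H)$, hence already form an $o(|S_n^X|)$ set by the hypothesis that almost all of $S_n^X$ maps into $A_n^H$. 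That observation sidesteps even covering entirely for the bad term and makes the estimate go through cleanly; even covering is then needed only implicitly, to guarantee the hypothesis ``almost all points map into $A_n^H$'' is compatible with the fiber structure, which we are given.
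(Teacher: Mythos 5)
There is a genuine gap at the heart of your second step. You claim that for a ``good'' $y$ --- one for which $B_{2\delta}^H(x')$ is not between $h_0$ and $\pi(y)$, with $x'\in S^H_{\lfloor\rho n\rfloor}$ between $h_0$ and $\pi(x)$ --- one has $d_H(\pi(x),\pi(y))\ge 2(\lfloor\rho n\rfloor-\delta)$. That is not what the fellow-traveling estimate gives. The estimate recalled in \S\ref{hyp} says that if two geodesic rays from $h_0$ become $2\delta$-separated at time $t_0$, then their distance at later times $t$ is at least $2(t-t_0-\delta)$: the distance grows \emph{after} the separation time; it is not twice the separation time. In your setup $t_0$ can be as large as $\lfloor\rho n\rfloor$ while $|\pi(x)|_H$ and $|\pi(y)|_H$ can be as small as $\rho n-1$ (the inner radius of the annulus), so the estimate only yields $d_H(\pi(x),\pi(y))\gtrsim |\pi(x)|_H+|\pi(y)|_H-2\lfloor\rho n\rfloor-2\delta$, which can be about $-2\delta$ --- vacuous. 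Concretely, two geodesics that coincide until time $\lfloor\rho n\rfloor-10\delta$ and then split end at points of $S^H_{\rho n}$ that are $O(\delta)$ apart, yet such a pair is ``good'' in your sense; so your displayed inequality with prefactor $2(\rho n-1-\delta)$ is false and $\ge 2\rho$ does not follow. The repair needs a second, independent parameter: put the cut point at radius $\lfloor\epsilon n\rfloor$ for small $\epsilon>0$, so that pairs whose geodesics have split by time $\epsilon n$ satisfy $d_H\ge |\pi(x)|_H+|\pi(y)|_H-2\epsilon n-2\delta\ge 2\rho n-2\epsilon n-2-2\delta$; show the remaining pairs are negligible via Coornaert (the relevant ratio is now $|S^H_{j-\lfloor\epsilon n\rfloor}|/|S^H_j|$, which is $O(\omega^{-\epsilon n})$ uniformly over $j$ in the annulus); and let $\epsilon\to 0$ at the end. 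This two-parameter structure is exactly what the paper's sketch encodes when it bounds the average distance between $S_i^H$ and $S_j^H$ below by $i+j-2\delta$ minus divergence-time and $\omega^{-(\cdot)n}$ error terms.

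A second, smaller problem is your proposed sidestep of even covering. The bad $y$ do \emph{not} project outside $A_n^H$: a point $\pi(y)$ lying on the continuation of the geodesic through $x'$ out to radius $n$ is bad and sits squarely in the annulus, so the bad set is not contained in the measure-zero complement of $\pi^{-1}(A_n^H)$. What is true is that the bad images in each sphere $S_j^H$ number at most $|B^H_{2\delta}|\cdot|S^H_{j-\lfloor\epsilon n\rfloor}|$, a vanishing \emph{proportion of $S_j^H$}; to convert that into a vanishing proportion of $S_n^X$ you must know that the fibers of $\pi$ over $S_j^H\cap A_n^H$ have essentially constant cardinality --- which is precisely the even-covering hypothesis, and it cannot be avoided in the way you suggest.
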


This is proved by showing that when $\rho n\le i,j\le n$, then the average distance between
a point in $S_i^H$ and a point in $S_j^H$ 
is bounded below by $i+j-2\delta-2\rho n - cn\omega^{-\rho n}$ for a constant $c$,
where $\omega$ is the growth rate of $H$, as in \eqref{coornaert}.

We will apply this lemma to products of the form $H\times K$ where $H$ is hyperbolic and $K$ grows strictly
slower---that is, $K$ has subexponential growth, or has a smaller exponential growth rate.
Let us say that a generating set for a product is {\em split} if every generator projects to the identity
in one of the factors.

\begin{prop}[Products with a dominated factor]
Suppose that $H$ is a non-elementary hyperbolic group, $K$ is finitely generated, and 
$S$ is a split finite generating set for $H\times K$ such that the growth
function of $H$ dominates the growth function of $K$ with generators projected to the factors from $S$.
Then $(H\times K,S)$ is a statistically hyperbolic presentation.
\end{prop}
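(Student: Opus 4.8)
The plan is to apply the Annulus Lemma with $X = H\times K$ and $\pi\colon X\to H$ the coordinate projection. Two things must be checked: first, that $\pi$ satisfies the hypotheses of the lemma for a suitable choice of $\rho$ arbitrarily close to $1$; second, that an upper bound matches the lower bound from the lemma, so that $E(H\times K,S)=2$ rather than merely $\liminf\ge 2\rho$. The upper bound is immediate since $d(x,y)\le 2n$ always forces $E\le 2$, so the real content is verifying the three hypotheses of the Annulus Lemma.

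I would proceed as follows. Because $S$ is split, each generator of $H\times K$ is either a generator of $H$ (trivial in $K$) or a generator of $K$ (trivial in $H$); write $S = S_H\sqcup S_K$ for the induced generating sets. Then the word metric on the product is the $\ell^1$-sum: $d_X((h,k),(h',k')) = d_H(h,h') + d_K(k,k')$, and in particular $\pi$ is a semi-contraction, in fact $|(h,k)|_X = |h|_H + |k|_K$. \textbf{Step 1: most of $S_n^X$ projects into an annulus in $H$.} A point $(h,k)\in S_n^X$ has $|h|_H = n - |k|_K$, so I need to show that $|k|_K \le (1-\rho)n$ for almost all points of $S_n^X$. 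Counting: the number of points of $S_n^X$ with $|k|_K = j$ is $|S_{n-j}^H|\cdot|S_j^K|$, which (using definite exponential growth of $H$, so $|S_m^H|\asymp\omega_H^m$ up to the standard two-sided exponential bounds in \eqref{coornaert}, and the strictly slower growth of $K$, so $|S_j^K|$ is either subexponential or $\asymp\omega_K^j$ with $\omega_K<\omega_H$) behaves like $\omega_H^{n-j}\cdot(\text{something growing slower than }\omega_H^j)$. Summing a geometric-type series shows the total mass with $j > (1-\rho)n$ is an exponentially small fraction of $|S_n^X|\asymp\omega_H^n$; hence almost all points of $S_n^X$ land in $A_n^H = B_n^H\setminus B_{\rho n - 1}^H$. \textbf{Step 2: even covering.} For $h\in A_n^H$ with $|h|_H = n - j$, the fiber $\pi^{-1}(h)\cap S_n^X$ is exactly $\{(h,k) : |k|_K = j\}$, which has cardinality $|S_j^K|$ depending only on $|h|_H$ (equivalently on $j = n - |h|_H$); so setting $f_n(m) = |S_{n-m}^K|$ gives exact even covering, not just for almost all $h$.

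\textbf{Step 3: conclude.} With all hypotheses of the Annulus Lemma verified for every $0<\rho<1$, the lemma gives $\liminf_{n\to\infty}\frac1{|S_n|^2}\sum_{x,y\in S_n}\frac1n d(x,y)\ge 2\rho$ for all $\rho<1$, hence $\liminf\ge 2$; combined with the universal upper bound $E\le 2$, the limit exists and equals $2$, so $(H\times K,S)$ is statistically hyperbolic, as claimed.

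The main obstacle I anticipate is \textbf{Step 1}, the concentration estimate: one must handle both the subexponential case and the case of a strictly smaller exponential rate for $K$ uniformly, and be careful that the two-sided bounds $c_1\omega^n\le\beta(n)\le c_2\omega^n$ for $H$ translate into comparable two-sided bounds for the \emph{sphere} sizes $|S_m^H| = \beta_H(m)-\beta_H(m-1)$ (which requires, e.g., that $\beta_H(m-1)$ is not too close to $\beta_H(m)$ — this is exactly the kind of consequence of \eqref{coornaert} already used in the proof of Theorem~\ref{hyperbolic}). Once the sphere sizes are pinned between two exponentials in the correct base, the geometric series estimate is routine. A secondary subtlety is purely bookkeeping: the Annulus Lemma is stated for based \emph{graphs}, so one works in the Cayley graph of $H\times K$ with generating set $S$ and notes that the projection of Cayley graphs induced by the split structure is indeed the coordinate projection of graphs, semi-contracting because $S$ is split.
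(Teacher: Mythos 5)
Your proposal is correct and follows exactly the paper's route: project to the $H$ factor, use the split structure to get the sphere decomposition $S_k^X=\bigcup_i S_i^H\times S_{k-i}^K$, verify the hypotheses of the Annulus Lemma (semi-contraction, concentration in the annulus via the growth domination, even covering with $f_n(m)=|S_{n-m}^K|$), and let $\rho\to 1$. The paper leaves the verification as ``easily checked''; your Steps 1--2 simply supply those details, including correctly flagging the sphere-versus-ball growth subtlety that is already implicit in the proof of Theorem~\ref{hyperbolic}.
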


\begin{proof}
Let $\pi$ be projection to the $H$ factor from $X=H\times K$ and note that 
$$S_k^X=\bigcup_{i=0}^kS_i^H\times S_{k-i}^K.$$
Thus one easily verifies the hypotheses of the annulus lemma.  Letting $\rho\to 1$ gives $E(X,S)=2$.
\end{proof}

Another class of statistically hyperbolic spaces is the {\em Diestel-Leader graphs}.  We describe them 
briefly here and refer the reader to \cite{woess} for a more thorough treatment and some relevant properties.
For $m,n\ge 2$, take an $(m+1)$-valent tree $T_1$ and an $(n+1)$-valent tree $T_2$.  
Choose ends and corresponding horofunctions
$f_1$ and $f_2$.  This gives height functions $h_1=f_1$ and $h_2=-f_2$ on the trees.  
We visualize $T_1$ as ``growing up''
from its end at height $-\infty$ and $T_2$ as ``hanging down'' from its end at $+\infty$.
The Diestel-Leader graph $DL(m,n)$ is defined to be the subspace of $T_1\times T_2$ on which $h_1=h_2$.
A height function $h$ is induced on this graph from the tree factors, since their height functions match.
Like Cayley graphs, Diestel-Leader graphs have vertex-transitive group actions by isometries, 
which guarantees that geometric invariants
of $DL(m,n)$ do not depend on the choice of basepoint.

These graphs are considered models for solvable geometry:  the structure described above is in precise analogy
with the geometry of Sol, which has hyperbolic plane factors in the place of trees.  
Eskin, Fisher, and Whyte \cite{efw}
exploit this analogy to completely classify Diestel-Leader graphs and spaces with Sol geometry up to quasi-isometry.
Furthermore, for $m\ge 2$, $DL(m,m)$ can be realized as Cayley graphs of solvable groups, namely the 
{\em lamplighter groups} $F\wr\Z$ where $F$ is a finite group of order $m$.

Denote the coordinate projections by $\pi_i:DL(m,n)\to T_i$ for $i=1,2$.
A geodesic $\gamma$ in $DL(m,n)$ is said to \textit{turn} if it switches from increasing in height to
decreasing in height or vice versa.  Geodesics in Diestel-Leader graphs have at most two turns.
The following lemma tells us that spheres of large radius in a Diestel-Leader graph are 
``concentrated in distant heights.''

\begin{lemma}[Concentration in height]
Let $X=DL(m,n)$ be a Diestel-Leader graph, $x_0\in X$ be a basepoint at height $0$, and $0<\rho<1$.
Denote by $S_k$ the sphere of radius $k$ in $ X$ centered at $x_0$.
For almost all $x\in S_k$, $\rho k\le |h(x)|\le k$.
If $m>n$, then for almost all $x\in S_k$, $\rho k\le h(x)\le k$.
\end{lemma}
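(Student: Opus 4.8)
The plan is to understand the structure of geodesics in $DL(m,n)$ and count how many elements of $S_k$ reach a given height. Recall that a geodesic in $DL(m,n)$ has at most two turns; for a vertex $x$ at height $h=h(x)$ with $h>0$, a geodesic from $x_0$ to $x$ first descends to some minimal height $-a$ (with $a\ge 0$), then climbs up to height $h$, so its length is $2a+h$ when $h\ge 0$ and it does not turn more than necessary — more precisely, the word length is $|x|=2a+h$ where $-a$ is the lowest height visited, and symmetrically for $h<0$. Thus for $x\in S_k$ with $h(x)=h\ge 0$ we have $2a+h=k$, i.e.\ the geodesic dips down by $a=(k-h)/2$. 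I would make this combinatorial description precise first, citing \cite{woess} for the geodesic structure.

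Next I would count $\#\{x\in S_k : h(x)=h\}$ for each admissible $h$ (same parity as $k$, $-k\le h\le k$). For $h\ge 0$, such an $x$ is determined by: a choice of how far down the geodesic dips, $a=(k-h)/2$; the vertex of $T_1$ reached at the bottom; and then the ascending path in $T_1$ of length $a+h$ from that bottom vertex; together with the matching data in $T_2$. Working in the two trees, the number of vertices of $T_1$ at combinatorial distance exactly $a$ below and then $a+h$ above grows like (a constant times) $m^{a}\cdot m^{a+h}$ up to lower-order branching corrections, while in $T_2$ the corresponding count grows like $n^{a}\cdot n^{a+h}$; since $h=h_1=h_2$ is forced to agree on $DL(m,n)$, the count of $x\in S_k$ at height $h$ is comparable to $(mn)^{a}\cdot (\text{growth in the ascending parts})$. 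Carrying out this bookkeeping carefully, one finds $\#\{x\in S_k:h(x)=h\}$ is, up to bounded multiplicative error, of the form $C\cdot \lambda^{k-|h|}\mu^{|h|}$ for appropriate $\lambda,\mu>1$ depending on $m,n$ — the key point being that the exponent of the dominant base is decreasing in $|h|$, so small $|h|$ is exponentially suppressed relative to $|h|$ near $k$.

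With that estimate in hand, the first assertion follows by a geometric-series comparison: $\#\{x\in S_k : |h(x)|<\rho k\}$ is bounded by a geometric sum dominated by its largest term at $|h|\approx \rho k$, which is exponentially smaller than $|S_k|\asymp$ (the term at $|h|\approx k$); hence the proportion of such $x$ tends to $0$. For the refinement when $m>n$: here the two trees are asymmetric, and one compares $x$ with $h(x)=h>0$ against $x$ with $h(x)=-h$. In the positive case the long ascending parts of length $a+h$ live in both trees but the extra height $h$ is "spent" ascending in $T_1$ (base $m$) on top and $T_2$ (base $n$) — whereas in the negative case the roles of which tree carries the net height flip. Tracking which base gets raised to the power $h$ versus $a$, one sees that when $m>n$ the positive-height vertices outnumber the negative-height ones by a factor exponential in $h$, so almost all of $S_k$ lies at height near $+k$. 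I would phrase this as: the count at height $h$ versus $-h$ differ by a factor comparable to $(m/n)^{h}$.

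The main obstacle I expect is the precise counting of vertices in the trees with a prescribed descent-then-ascent profile, since the branching numbers at a vertex of $T_i$ depend on whether one is moving toward or away from the chosen end (valence $m$ or $m+1$ in $T_1$, etc.), so the naive powers $m^a$, $n^a$ need $O(1)$-in-the-exponent corrections and one must be careful that the "turning vertex" and the constraint $h_1=h_2$ are handled consistently across the product. Once the height-distribution estimate $\#\{x\in S_k:h(x)=h\}\asymp C\lambda^{k-|h|}\mu^{|h|}$ (with $\lambda<\mu$, and the asymmetric refinement for $m\ne n$) is established, both conclusions are immediate from dominated geometric sums, exactly parallel to how definite exponential growth was used in the proof of Theorem~\ref{hyperbolic}.
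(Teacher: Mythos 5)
Your overall strategy --- classify points of $S_k$ by their height profile, count them using the tree structure, and conclude by comparing exponential rates --- is the same one the paper uses, and your target estimate (a count of the form $C\lambda^{k-|h|}\mu^{|h|}$, with an $(m/n)^{h}$ asymmetry between $h$ and $-h$) is essentially the right shape. But the counting step as you describe it is wrong, and the error sits exactly at the point that makes the lemma true. In $T_1$ (growing up from its end at $-\infty$) a step \emph{toward} the end is unique, and only upward steps branch $m$-fold; in $T_2$ it is the downward steps that branch $n$-fold while upward steps are unique. You instead count $m^{a}\cdot m^{a+h}$ choices in $T_1$ and $n^{a}\cdot n^{a+h}$ in $T_2$, i.e.\ you let every step in every tree branch with the full valence. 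Taken at face value this gives on the order of $(mn)^{k}$ vertices at \emph{every} admissible height --- a height-independent distribution from which no concentration can follow. You do flag the branching-direction issue as your ``main obstacle,'' but you describe the needed fix as an ``$O(1)$-in-the-exponent correction''; it is not: it replaces, for instance, the factor $m^{a}$ for the descending leg in $T_1$ by $1$, changing the base of the exponential. The correct count for $h\ge 0$ is on the order of $m^{a+h}\cdot n^{b}$, where $a$ is the depth of the initial dip and $b$ the overshoot above $h$ (with $2a+2b+h=k$), summing over $a+b=(k-h)/2$ to $\asymp m^{(k+h)/2}$. The directional asymmetry of branching is the entire mechanism producing concentration at $|h|$ near $k$, not a bounded error term.

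Two smaller issues. First, you model a geodesic to a positive-height vertex with a single turn (dip to $-a$, then climb to $h$), but a generic vertex at height $h>0$ forces two turns: its $T_2$-coordinate generally requires an overshoot to height $h+b$ before descending to $h$. Your one-turn model happens to capture the dominant term when $m\ge n$, but for the upper bound on $\#\{x\in S_k: |h(x)|<\rho k\}$ you must count all endpoints, including the two-turn ones. Second, a count depending only on $|h|$ is incompatible with the $(m/n)^{h}$ asymmetry you correctly invoke for the $m>n$ refinement; moreover, on the negative branch the count is $\asymp m^{(k-|h|)/2}n^{|h|}$, which for $m>n^{2}$ is \emph{decreasing} in $|h|$, so your ``geometric sum dominated by its largest term at $|h|\approx\rho k$'' step must be run against the global maximum at $h\approx +k$ rather than branch by branch. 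All of this is repairable --- once the per-tree branching is corrected, your comparison of exponential rates does deliver both conclusions, much as in the paper --- but as written the central estimate is asserted rather than derived, and the stated reason it should hold is false.
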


\begin{proof}
Assume $m\ge n$ and
consider the problem of counting $k$-tuples  $(x_1,x_2,\ldots,x_k)$ of vertices of $X$
such that each pair $(x_i,x_{i+1})$ bounds an edge and the concatenation of these edges forms
a geodesic in $ X$.  If we start by choosing $x_1$ to
be immediately above $x_0$, then we have $m$ choices, since there are $m$ vertices of $T_1$ immediately above
$\pi_1(x_0)$ and only one vertex of $T_2$ above $\pi_2(x_0)$.  If $x_2$ is chosen immediately above $x_1$, then there are
$m^2$ choices for the pair $(x_1,x_2)$.  In general, there are $m^i$ ways to choose a tuple of vertices
$(x_1,\ldots,x_i)$ such that $x_{j+1}$ is immediately above $x_j$.  Suppose we now choose $x_{i+1}$ below $x_i$.
Then $\pi_1(x_{i+1})=\pi_1(x_{i-1})$, which means that we have lost one of our choices for a vertex in $T_1$.
This choice is replaced by the choice of a vertex $\pi_2(x_{i+1})$ immediately below $\pi_2(x_i)$ other than $\pi_2(x_{i-1})$.
There are $n-1$ such possibilities.  If we now continue choosing vertices to be decreasing in height, then
we continue replacing factors of $m^i$ with factors $k\le m$.  So turns in a geodesic reduce the number of choices
and geodesics continue in the same direction for as long as they can before turning.
If $m=n$, then the same argument applies if we begin choosing $x_1$ immediately below $x_0$, so geodesics
tend to end in heights which are distant in either the positive or the negative direction.
If $m>n$, then the above argument shows that geodesics in fact tend to end in high (positive) heights.
\end{proof}

\begin{theorem}[Diestel-Leader graphs]
For any $m,n\ge 2$, the Diestel-Leader graph $ X=DL(m,n)$ is statistically hyperbolic.
\end{theorem}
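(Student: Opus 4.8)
The plan is to deduce the theorem from the two tools assembled above: the concentration-in-height lemma and the annulus lemma. Since $d(x,y)\le 2k$ for $x,y\in S_k$, the quantity $\tfrac1{|S_k|^2}\sum_{x,y\in S_k}\tfrac1k\,d(x,y)$ never exceeds $2$, so it is enough to prove that for every fixed $\rho\in(0,1)$
$$\liminf_{k\to\infty}\ \frac1{|S_k|^2}\sum_{x,y\in S_k}\frac1k\,d(x,y)\ \ge\ 2\rho,$$
and then let $\rho\to1$. We will use the following elementary facts. Each $T_i$ is a regular tree of valence $\ge 3$, hence is $0$-hyperbolic with definite exponential growth. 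The coordinate projections $\pi_i\colon X\to T_i$ are semi-contractions, and every edge of $X$ raises or lowers the common height $h$ by one; consequently $d_X(x,y)\ge|h(x)-h(y)|$ as well as $d_X(x,y)\ge d_{T_i}(\pi_i x,\pi_i y)$. Finally, since the tree height functions $h_i$ are $1$-Lipschitz, $d_{T_i}(\pi_i x_0,\pi_i x)\ge|h(x)|$ for all $x$.

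Suppose first that $m\neq n$; by symmetry we may take $m>n$. The concentration-in-height lemma then gives $\rho k\le h(x)\le k$ for almost all $x\in S_k$, and for such $x$ the point $\pi_1(x)$ lies in the annulus $A_k^{T_1}=B_k^{T_1}\setminus B_{\rho k-1}^{T_1}$ — the outer bound because $\pi_1$ is a semi-contraction, the inner bound because $d_{T_1}(\pi_1 x_0,\pi_1 x)\ge h(x)\ge\rho k$. The one point needing a computation is that $\pi_1$ covers this annulus evenly: the number of preimages in $S_k^X$ of a vertex $p\in A_k^{T_1}$ should depend only on $d_{T_1}(\pi_1 x_0,p)$. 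This follows by counting geodesics in $X$ and using that they turn at most twice. With the hypotheses verified, the annulus lemma applies with $H=T_1$ and yields $\liminf_k E_k\ge2\rho$, hence $E(X)=2$; the case $n>m$ is the same with the two factors interchanged.

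In the case $m=n$ the concentration-in-height lemma only yields $\rho k\le|h(x)|\le k$ for almost all $x\in S_k$, with no control on the sign of $h(x)$. So split $S_k$ into the two pieces $S_k^{+}=\{x\in S_k:h(x)\ge\rho k\}$ and $S_k^{-}=\{x\in S_k:h(x)\le-\rho k\}$, which together account for all of $S_k$ up to a set of vanishing proportion (and which have equal size, by the isometry of $DL(m,m)$ interchanging $T_1$ and $T_2$). Exactly as before, $\pi_1$ maps $S_k^{+}$ into $A_k^{T_1}$ with evenly covered fibers, and — applying the same interchanging isometry — $\pi_2$ maps $S_k^{-}$ into $A_k^{T_2}$ with evenly covered fibers. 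Running the proof of the annulus lemma on each of these two sub-populations shows that the average of $\tfrac1k\,d(x,y)$ over $S_k^{+}\times S_k^{+}$, and over $S_k^{-}\times S_k^{-}$, is at least $2\rho-o(1)$; and if $x\in S_k^{+}$ and $y\in S_k^{-}$ then already $d(x,y)\ge h(x)-h(y)\ge2\rho k$. Averaging these estimates over $S_k\times S_k$ gives $\liminf_k E_k\ge2\rho$, and letting $\rho\to1$ completes the proof.

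The one step that is not purely formal is the claim that the relevant projection covers its target annulus evenly — equivalently, that as $x$ ranges over the appropriate sub-population of $S_k$ the vertices $\pi_i(x)$ are spread out enough in $T_i$ that two independent samples fellow-travel for only $o(k)$ steps. This is where the structure of $DL(m,n)$ enters, through the fact that its geodesics make at most two turns; the resulting count is routine but somewhat fiddly, and is the only real content beyond assembling the concentration-in-height and annulus lemmas.
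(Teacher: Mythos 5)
Your argument follows the paper's proof essentially step for step: project to the dominant tree factor, verify the hypotheses of the annulus lemma via the concentration-in-height lemma plus an even-covering count, and in the balanced case $m=n$ handle the cross pairs with $h(x)>0$, $h(y)<0$ by the height-difference lower bound. The one step you defer --- that the fiber cardinality of $\pi_1^{-1}(y)\cap S_k^X$ depends only on $d(\pi_1 x_0,y)$ --- is exactly the step the paper carries out, by noting that the only significant $T_2$-choices along a geodesic occur on its final downward portion, of length $(k-d)/2$; with that count filled in, your proof is correct and coincides with the paper's.
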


\begin{proof}
Let $S_k$ denote the sphere of radius $k$ centered at a point $x_0$ of height zero and let $\rho$ be fixed.
We will begin by considering the case where $m>n$.  $\pi_1: X\to T_1$
is a semi-contraction, since $\pi_1$ takes paths in $ X$ to paths in $T_1$ while preserving
their length.  By the previous lemma, a full-measure subset $U_k$ of
$S_k$ lies above height $\rho k$.  A similar argument can be used to show that the same is true
for $T_1$, and that a full-measure 
subset $V_k$ of the annulus $A_k=B_k \setminus B_{\rho k}$ centered at $\pi_1(x_0)$ lies above
height $\rho k$.  In fact, $V_k=\pi_1(U_k)$.  In order to apply the Annulus Lemma, we just need show
the even covering condition.  Suppose $y\in V_k$ with $d=d(y,\pi_1(x_0))$, and let $\gamma$
be a geodesic in $ X$ of length $k$ starting at $x_0$ and ending at a point of $\pi^{-1}(y)$.
Then $\pi_1\gamma|_{[0,d]}$ is a path from $\pi_1(x_0)$ to $y$.  Such a path may initially
decrease in height, and so choices are made in the $T_2$ coordinate.  But since $y$ is above height
zero, $\gamma$ must then come back up and any choices made in $T_2$ for the initial portion of $\gamma$
will have no effect on where $\gamma$ ends.  The only significant choices in $T_2$ for $\gamma$
occur after $\pi_1\gamma$ passes $y$ and turns around again.  This final downward portion of $\gamma$ has length $(k-d)/2$.
So $k$ and $d$ must have the same parity and the number of points in the preimage of $y$ is a function of $d$.
Thus we apply the annulus lemma to get $E( X)=2$.

If $m=n$, then above argument shows that the average distance between a pair of points in $S_k$ above height 0
is close to $2k$.  By symmetry, it follows that the average distance between a pair of points in $S_k$
below height 0 is also close to $2k$.  But a significant proportion of pairs $x,y\in S_k$ will have the property
that $h(x)>0$ and $h(y)<0$.  By the previous lemma we may again assume that $h(x)$ is close to $k$ and that
$h(y)$ is close to $-k$.  So the difference in heights, a lower bound on distance, is close to $2k$.
\end{proof}

When $m=n$, the Diestel-Leader graph $DL(m,m)$ can be realized as the Cayley graph of the lamplighter group
$\Z_m\wr\Z$ for a certain natural generating set (or, more generally, $F\wr \Z$ for any finite group $F$ of order $m$).  
These interesting solvable groups are not nilpotent and they are not finitely presented.

\begin{corollary}[Lamplighter groups]
The lamplighter groups $\Z_m\wr\Z$ have statistically hyperbolic presentations.
\end{corollary}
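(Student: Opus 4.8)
The plan is to deduce this immediately from the Diestel--Leader theorem above, once we identify an appropriate Cayley graph of the lamplighter group with a Diestel--Leader graph. First I would recall the classical fact (see \cite{woess}) that for a finite group $F$ of order $m$, the wreath product $F\wr\Z$ carries a finite ``walk-and-switch'' generating set $S_0$ whose Cayley graph is isometric to $DL(m,m)$. Concretely, writing elements of $F\wr\Z$ as pairs $(f,t)$ with $f\colon\Z\to F$ finitely supported, one lets $S_0$ consist of the elements that change $t$ by $\pm1$ and simultaneously reset the lamp at one position; a direct bookkeeping argument shows that the resulting word metric agrees with the graph metric on $DL(m,m)$ under the standard vertex bijection, with the $\Z$-coordinate $t$ playing the role of height. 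This metric identification is where essentially all the content lies, but it is well documented, so I would simply invoke it rather than reprove it.

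Second, I would note that sprawl is an invariant of the isometry type of the Cayley graph: the defining limit $E(G,S)=\lim_{n}|S_n|^{-2}\sum_{x,y\in S_n}\tfrac1n\,d(x,y)$ is preserved verbatim by any graph isometry, and the basepoint is irrelevant because the Cayley graph is vertex-transitive (as already observed for Diestel--Leader graphs). Hence $E\bigl(F\wr\Z,\,S_0\bigr)=E\bigl(DL(m,m)\bigr)$, and in particular this holds for $F=\Z_m$.

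Finally, the Diestel--Leader theorem gives $E\bigl(DL(m,m)\bigr)=2$: the Annulus Lemma (applied as in that proof) yields $\liminf_n |S_n|^{-2}\sum_{x,y\in S_n}\tfrac1n d(x,y)\ge 2\rho$ for every $\rho<1$, while $E\le 2$ always, so the limit exists and equals $2$. Therefore $(\Z_m\wr\Z,S_0)$ is a statistically hyperbolic presentation, which is the claim. The one genuine obstacle is the first step --- verifying that the word metric for $S_0$ really is the $DL(m,m)$ metric --- and everything after that is formal; I would keep the writeup short by citing \cite{woess} for that identification.
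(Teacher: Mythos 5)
Your proposal is correct and follows essentially the same route as the paper: the paper derives this corollary directly from the preceding theorem on Diestel--Leader graphs together with the (cited, not reproved) fact that $DL(m,m)$ is realized as the Cayley graph of $F\wr\Z$ for a suitable generating set, exactly as you do. The only content beyond the theorem is the metric identification, which both you and the paper delegate to \cite{woess}.
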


Finally, besides Euclidean space itself, the symmetric spaces of noncompact type also have $E=2$
(essentially because of the probability tending to zero that pairs of points lie in a common flat).
We know of no examples of groups of exponential growth with $E<2$, but because of the facts above
it would be natural to expect that groups of non-uniform exponential growth need not have $E=2$.

\section{Reducing from free abelian groups to convex geometry}\label{limit-measure}

In the free abelian groups $\Z^d$, studying the large-scale metric geometry is greatly aided 
by the natural embedding in $\R^d$.  
It is known that the finite word metrics on $\Z^d$ are asymptotic to norms on $\R^d$
(originally due to Burago \cite{burago}, and shown by an elementary geometric argument in \cite{limit-measure}),
so that these norms can be thought of as {\em limit metrics} coming from group theory.  
Recall that any convex, centrally symmetric body in $\R^d$ induces a  {\em Minkowski norm},  
namely the norm for which that convex body is the unit ball.
If a generating set for $\Z^d$ is called $S$, let $| \W|$ denote the length of $\W\in\Z^d$ in the word metric, and
let 
$L$  be the boundary of the convex hull of $S$ in $\R^d$.  Then the Minkowski norm 
$\normL{\cdot}$ having $L$ as its unit sphere is the limit metric,  
in the sense that there is a constant $K$ depending on $S$ 
such that 
$$\|\W\|_L \le |\W | \le \|\W\|_L + K$$
for all $\W\in \Z^d$.
This limit shape 
$L$ describes the asymptotic shape of spheres in the sense that 
$\frac 1n S_n \to L$ (say as a Gromov-Hausdorff limit).

In an earlier paper, we proved counting results for spheres in word metrics on $\Z^d$, showing that 
counting measure on the discrete spheres $S_n$ converges to the {\em cone measure} $\muL$ on $L$,
as pictured in Figure~\ref{cone-measure}.   The case of that theorem that is useful for us here states that
$$\lim_{n\to\infty} \frac{1}{|S_n|^2} \sum_{\X,\Y\in S_n} \frac 1{n} d(\X,\Y) 
= \int_{L^2} \normL{\X-\Y} \ d\muL^2(\X,\Y).$$
(The original theorem addresses more general averaging problems.)
Thus it follows immediately that 
$$E(\Z^d,S)= \int_{L^2}  \normL{\X - \Y} \ d\muL^2(\X,\Y)$$
for all finite generating sets $S$.

\begin{figure}[ht]
\begin{center}
\begin{tikzpicture}
\begin{scope}[scale=1.3]
\draw[black, thick](0,-1)--(1,0)--(0,1)--(-1,0)--cycle;
\draw[orange,fill=orange!50](0,0)--(3/14,11/14)--(7/14,7/14)--cycle;
\draw[ultra thick,red](3/14,11/14)--(7/14,7/14);

\begin{scope}[xshift=3cm]
\draw[black, thick](-1,-1)--(0,-1)--(1,0)--(1,1)--(0,1)--(-1,0)--cycle;
\draw[blue!50!purple,fill=blue!25!purple!25](0,0)--(2/7,1)--(5/7,1)--cycle;
\draw[ultra thick,red](2/7,1)--(5/7,1);
\draw[blue!50!purple,fill=blue!25!purple!25](0,0)--(1/7,-6/7)--(4/7,-3/7)--cycle;
\draw[ultra thick,red](1/7,-6/7)--(4/7,-3/7);
\end{scope}

\begin{scope}[xshift=6cm, scale=1/2]
\draw[black, thick] (2,1)--(1,2)--(-1,2)--(-2,1)--(-2,-1)--(-1,-2)--(1,-2)--(2,-1)--cycle;
\draw[blue!20!green,fill=blue!10!green!40](-.3,2)--(.7,2)--(0,0)--cycle;
\draw[ultra thick,red](-.3,2)--(.7,2);
\draw[blue,fill=blue!40](4/3,5/3)--(2,1)--(0,0)--cycle;
\draw[ultra thick,red](4/3,5/3)--(2,1);
\end{scope}

\begin{scope}[xshift=9cm]
\draw[black, thick](0,0) circle (1);
\draw[brown, fill=brown!50](0,0)-- (100:1) arc (100:880/7:1)--cycle;
\draw[ultra thick,red] (100:1) arc (100:880/7:1);
\end{scope}
\end{scope}

\end{tikzpicture}
\end{center}
\caption{Six arcs are shown in red in this figure, each having cone measure $1/14$; in
other words, all of the colored regions have $1/14$ as much area as the convex body they
are in.  In the square and the hexagon, all sides have equal measure.  
On the other hand, for this octagon  generated by the {\em chess-knight} moves $\{(\pm 2,\pm 1),(\pm
1,\pm 2)\}$, the measure of its two types of sides (shown with green and blue) is in
the ratio $4:3$.  Cone measure is defined on any perimeter, and
in particular it is uniform on the circle.\label{cone-measure}}
\end{figure}
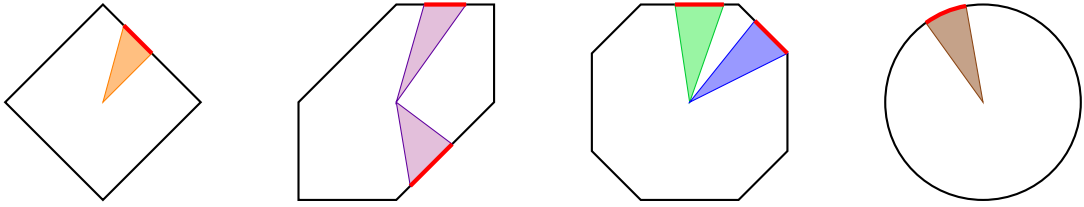

We can define the sprawl of
any perimeter $L$ by the right-hand side, which we can denote by
$E(L)$,  measuring average distance between points of $L$ as measured in its 
intrinsic geometry.  
We remark that $E(L)=E(TL)$ for any linear transformation $T:\R^d\to\R^d$,
since both the norm and the measure push forward under linear transformation.  That is,
$\normTL{T\X-T\Y} = \normL{\X-\Y}$, and $d\muTL(T\X)=d\muL(\X)$.

One immediate consequence of the reduction to convex geometry is that $E(\Z^d,S)$ is always greater
than $1/2$.

\begin{proposition} $E(L)>\frac 12$ for all perimeters $L$ in $\R^d$. \end{proposition}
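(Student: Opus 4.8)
The plan is to prove the lower bound $E(L) > \tfrac12$ by a symmetry-and-triangle-inequality argument, exploiting the central symmetry of the perimeter $L$. Recall that we must estimate
$$E(L) = \int_{L^2} \normL{\X-\Y}\, d\muL^2(\X,\Y),$$
where $\muL$ is the cone measure, a probability measure on $L$. The key structural feature is that $L = -L$, and since the cone measure pushes forward under linear transformations (in particular under $\X \mapsto -\X$), we have that the map $\Y \mapsto -\Y$ is measure-preserving on $(L,\muL)$. So for a fixed $\X \in L$, pairing the contribution of $\Y$ with that of $-\Y$ gives
$$\int_L \normL{\X-\Y}\, d\muL(\Y) = \tfrac12 \int_L \Bigl( \normL{\X-\Y} + \normL{\X+\Y} \Bigr) d\muL(\Y).$$

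Now I would apply the triangle inequality in the Minkowski norm: $\normL{\X-\Y} + \normL{\X+\Y} \ge \normL{(\X-\Y)+(\X+\Y)} = \normL{2\X} = 2\normL{\X} = 2$, since $\X \in L$ means $\normL{\X} = 1$. This already gives $E(L) \ge 1$ — which is too strong and cannot be right, so I should double-check: the issue is that $\normL{\X-\Y}$ can genuinely be as small as we like (take $\Y$ close to $\X$), so the bound $E(L) \ge 1$ contradicts, e.g., the interval case where $E(\Z,S) = 1$ but the dimension-$1$ "perimeter" is degenerate (two points, not a convex body with interior). In dimension $d \ge 1$ with $L$ a genuine perimeter, the triangle-inequality computation above does appear to give $E(L) \ge 1$. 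So the real content here must be the \emph{strict} inequality $> \tfrac12$, which is far weaker; the clean argument is simply: by the substitution $\Y \mapsto -\Y$,
$$E(L) = \tfrac12 \int_{L^2} \bigl( \normL{\X-\Y} + \normL{\X+\Y}\bigr) d\muL^2 \ge \tfrac12 \int_{L^2} \normL{2\X}\, d\muL^2 = \tfrac12 \int_{L^2} 2\, d\muL^2 = 1 > \tfrac12.$$

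So the proof is genuinely short, and the main subtlety — the step I'd be most careful about — is the justification that $\Y \mapsto -\Y$ preserves the cone measure $\muL$, together with the fact that $\normL{\cdot}$ is a genuine norm (hence satisfies the triangle inequality and $\normL{\X} = 1$ for all $\X \in L$), both of which follow from the central symmetry of the convex body bounded by $L$ and the remarks already made in the text that the measure and norm push forward under linear maps. One should also note that the inequality above is in fact an overkill proof of a stronger statement ($E(L) \ge 1$), but I would state and prove exactly what is claimed, $E(L) > \tfrac12$, perhaps remarking that strictness is automatic since the integrand $\normL{2\X} \equiv 2$ is bounded away from $1$; no compactness or regularity of $L$ beyond measurability is needed.
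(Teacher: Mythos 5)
Your argument is correct, but it is a genuinely different proof from the one in the paper, and it establishes a strictly stronger conclusion. The paper fixes $\X\in L$ and observes that the set of $\Y$ at distance less than $1$ from $\X$ lies in the translate $Q+\X$ of the unit ball $Q$; a supporting hyperplane of $Q+\X$ through the origin together with the central symmetry of the cone measure shows that at least half of $\muL$ sits at distance $\ge 1$ from $\X$, giving the average $\ge \tfrac12$, and strictness is then obtained by the separate observation that a neighborhood of $-\X$ contributes distances near $2$. You instead symmetrize in $\Y$: since $-I$ is a linear map carrying $L$ to $L$, it preserves $\muL$ (exactly the pushforward property the paper records), so $\int_L \normL{\X-\Y}\,d\muL(\Y)=\tfrac12\int_L\bigl(\normL{\X-\Y}+\normL{\X+\Y}\bigr)d\muL(\Y)\ge\tfrac12\normL{2\X}=1$ by the triangle inequality for the Minkowski norm. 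This yields $E(L)\ge 1>\tfrac12$ with no case analysis and no separate strictness step; the bound $E(L)\ge 1$ is sharp only in the degenerate dimension-one case and is consistent with every value computed in the paper (all of which exceed $4/\pi$ in dimension $\ge 2$). What the paper's hyperplane argument buys is a picture (half the perimeter is ``far'' from any given point) that does not use the triangle inequality at all; what yours buys is a better constant from a two-line computation. Both hinge on the same two facts---central symmetry of $L$ and linear equivariance of the cone measure---so your proof is a legitimate, and arguably preferable, replacement.
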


\begin{proof}
Fix an arbitrary point $x\in L$, and denote by $Q$ the convex body of which $L$ is boundary.
The points of $L$ whose distance from $x$ is less than one are those contained
in $Q+x$, the translated copy of $Q$ centered at $x$.  Since $L+x$ contains $0$ and $Q$ is
convex, there is a hyperplane $P$ through $0$ which does not intersect the interior of $Q+x$.
So the interior of $Q+x$ is on one side of $P$, and by central symmetry, half of the cone measure
lies on each side of $P$.  Thus the average distance on $L$ from $x$ is $\ge (1/2)(1)$.  To obtain the strict
inequality, just note that the distance from $x$ to $-x$ is always $2$ and so a small neighborhood of $-x$ contributes
an amount near $2$ to the average.
\end{proof}

\section{Sprawl in the plane}\label{plane}

From the work above, we have reduced the group calculation $E(\Z^d,S)$
to the convex geometry calculation $E(L)$.  In this section we study this convex geometry
in dimension $2$, by first introducing an algorithm for evaluating $E(L)$.
This algorithm can be given to a computer (which we did, producing a great deal of experimental 
evidence for the conjectures to follow) but can also be used to produce precise formulas, such as those
given below for the regular polygons.

\subsection{{\em Cutline} algorithm}\label{cutline}

To compute the sprawl of a polygon, we can average the expected distances between 
pairs of sides. 
Pick two sides $\sigma$ and $\tau$ of $L$ and parametrize each of them (say clockwise) by $[0,1]$;
then the distance in the $L$-norm from $\sigma(s)$ to $\tau(t)$ is piecewise linear.  Thus 
for an appropriate triangulation of the parameter space, average-distance is 
a linear function on each triangle.  
We outline here a method for triangulating, which we call the {\em cutline} algorithm for 
computing the sprawl of a polygon.  We note that the algorithm generalizes straightforwardly to
higher dimensions.

Fix $\sigma$ and $\tau$.
Find the sector of angles at which the 
sides ``see" each other---that is,  the interval of arguments obtained
by vectors from $\sigma(s)$ to $\tau(t)$---as in the first picture in Figure~\ref{cutlines}.  
Considering the same sector of angles viewed from the origin, as in the second picture,
mark the angles that point in vertex directions in this sector (shown as a dashed line).  

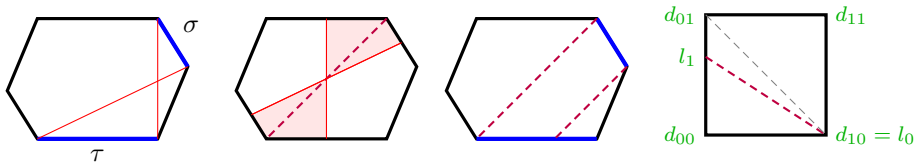
\begin{figure}[ht]

\begin{tikzpicture}[auto, scale=0.8]
\pgfsetbaseline{0cm}
\draw [very thick]
  (1,-1) -- (1.5,0.2) -- node [swap] {$\sigma$} (1,1) -- (-1,1) -- 
  (-1.5,-0.2) -- (-1,-1) -- node [swap] {$\tau$} (1,-1) -- cycle;
\draw [ultra thick,blue] (1.5,0.2) -- (1,1);
\draw [ultra thick,blue] (-1,-1) -- (1,-1);
\draw [red] (1.5,0.2) -- (-1,-1);
\draw [red] (1,1) -- (1,-1);
\end{tikzpicture}%
\quad%
\begin{tikzpicture}[scale=0.8]
\pgfsetbaseline{0cm}
\filldraw [draw=red,fill=red!10!white]
  (1.25,0.6) -- (-1.25,-0.6) -- (-1,-1) -- (0,-1) -- (0,1) -- (1,1) --
  (1.25,0.6) -- cycle;
\draw [very thick]
  (1,-1) -- (1.5,0.2) -- (1,1) -- (-1,1) -- (-1.5,-0.2) -- (-1,-1) -- cycle;
\draw [densely dashed, purple, thick] (1,1) -- (-1,-1);
\draw [red] (1.25,0.6) -- (-1.25,-0.6);
\draw [red] (0,1) -- (0,-1);
\end{tikzpicture}%
\quad%
\begin{tikzpicture}[scale=0.8]
\pgfsetbaseline{0cm}
\draw [very thick]
  (1,-1) -- (1.5,0.2) -- (1,1) -- (-1,1) -- (-1.5,-0.2) -- (-1,-1) -- cycle;
\draw [ultra thick,blue] (1.5,0.2) -- (1,1);
\draw [ultra thick,blue] (-1,-1) -- (1,-1);
\draw [densely dashed, purple, thick] (1,1) -- (-1,-1);
\draw [densely dashed, purple, thick] (1.5,0.2) -- (0.3,-1);
\end{tikzpicture}%
\quad%
\begin{tikzpicture}[scale=1.6]
\pgfsetbaseline{0.75cm}
\draw [very thick] (0,0) rectangle +(1,1);
\node at (0,1) [left,color=green!70!black] {\footnotesize $d_{01}$};
\node at (0,0.65) [left,color=green!70!black] {\footnotesize $l_1$};
\node at (0,0) [left,color=green!70!black] {\footnotesize $d_{00}$};
\node at (1,0) [right,color=green!70!black] {\footnotesize $d_{10}=l_0$};
\node at (1,1) [right,color=green!70!black] {\footnotesize $d_{11}$};
\draw [densely dashed, purple, thick] (0,0.65) -- (1,0);
\draw [densely dashed, black!50] (0,1) -- (1,0);
\end{tikzpicture}

\caption{A depiction of the algorithm for finding the average distance between 
sides $\sigma$ and $\tau$.}\label{cutlines}
\end{figure}

For each vertex direction $\theta$, consider the line $T\subset [0,1]\times [0,1]$ of times at which the vector between the 
sides points in the vertex direction; the corresponding chords form a trapezoid as in the third picture.
For each trapezoid, record the lengths of its bases, marked in the figure as $l_0$ and $l_1$.  
(In general, for direction $\theta_i$,
these are the largest and smallest values of $d(\sigma(s),\tau(t))$ for $(s,t)\in T_i$, and can be denoted $l_{i0}$ and $l_{i1}$.)
Let $d_{00},d_{01},d_{10},d_{11}$
be the four distances between an endpoint of $\sigma$ and an endpoint of $\tau$
(measured in the $L$-norm), with $d_{ij}=d(\sigma(i),\tau(j))$.

Next, consider the unit square formed by the parameters $[0,1]\times [0,1]$.  All the distances between points on 
the two chosen sides of the polygon can be recorded by a real-valued function on this square.  
To find the average distance between sides $\sigma$ and $\tau$, we only need to integrate that function over the 
square (using Lebesgue measure because the cone measure is proportional to arclength on each side; the proper
weights will be restored below).
Since the function is piecewise linear, it will suffice to know its values at the points of a triangulation that is fine enough
that the function is linear on each triangle.

For each vertex direction $\theta_i$, the corresponding times $T_i$ cut out a straight segment
across the square, which we will call a {\em cutline}.  
The values at the corners of the square are the $d_{ij}$ and the values at the endpoints of the cutlines are the $l_{ij}$.
If the cutlines do not triangulate the square,
add dummy cutlines as needed (between these same points, so requiring no further distance calculations) to complete a triangulation.  
One such dummy cutline is shown in the figure.

Now the average distance between a point on side $\sigma$ and a point on side $\tau$ can
be read off of this parameter square by just knowing the values at the vertices of the triangles:
for each triangle, average the values at its vertices, and then sum those averages over
all the triangles, weighted by the areas of the triangles.  Thus let $E_{ij}$ denote
the average distance between $\sigma_i$ and $\sigma_j$.
Let $w_i$ be the weight of the $i$th side in the cone measure:  
$w_i= \muL(\sigma_i)$.
Then, finally, the average distance between all pairs of points on the polygon
can be written as the weighted average:
$$E(L) = \frac{\sum_{i,j} w_iw_j E_{ij}}{\sum_{i,j} w_iw_j}.$$

\subsection{Values}

By applying the cutline algorithm, we find formulas for the sprawls of regular polygons.  
We note that the regular hexagon is equivalent by linear transformation to the 
hexagon with vertices $\pm (1,0), \pm (1,1), \pm (0,1)$, which is the limit set
for the generating set $S=\pm\{\E_1,\E_2,\E_1+\E_2\}$.  For regular polygons with 
at least $8$ sides, however, they are not exactly realized by word metrics on $\Z^2$.

\begin{proposition}\label{polygons}  Let $P_k$ be the regular $k$-gon
and let $S^1$ be the round unit circle.  Then 
$$\begin{array}{rll}

\smallskip

E(\Z^2, \pm \{ \E_1,\E_2 \})=E(P_4) &=& \frac 43; \\

\smallskip

E(\Z^2, \pm \{ \E_1,\E_2, \E_1+\E_2 \}) = E(P_6) &=& \frac{23}{18}<\frac 43;\\

\smallskip

E(P_8) &=&\displaystyle \frac {1+2\sqrt 2}3 < \frac{23}{18};\\

\smallskip

E(P_{x}) &=& \left\{ \begin{array}{ll}
\smallskip

\displaystyle\frac 4\pi \cdot 
\left( \displaystyle\frac{\pi/x}{\tan(\pi/x)} + 
\frac 13 \textstyle\frac{\pi}x \tan(\textstyle\frac{\pi}x)  \right), & x\in 4\N,\\
\displaystyle \frac 4\pi \cdot 
\left( \displaystyle\frac{\pi/x}{\sin(\pi/x)} - 
\frac 16 \textstyle\frac{\pi}x \sin(\textstyle\frac{\pi}x) \right), & x\in 4\N+2,
\end{array}
\right.\\

\smallskip

E(S^1) &=& \displaystyle \frac 4\pi.
\end{array}$$
\end{proposition}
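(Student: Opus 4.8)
The plan is to compute $E(L)$ for each of the listed perimeters by applying the \emph{cutline} algorithm of \S\ref{cutline}, which reduces the problem to a finite collection of integrals of piecewise-linear functions over the parameter square, hence to a sum of weighted triangle-averages. For the regular $k$-gons, the crucial simplification is that the dihedral symmetry group acts transitively on sides and on ordered pairs of sides at a fixed ``angular separation.'' So instead of summing $w_iw_jE_{ij}$ over all $\binom{k}{2}$-ish pairs, I would fix one side $\sigma_0$ and compute $E_{0j}$ for $j=0,1,\dots,\lfloor k/2\rfloor$, then form the average $E(P_k)=\frac1k\sum_{j}E_{0j}$ (with the appropriate counting multiplicity for $j$ and its mirror), using that $w_i=1/k$ for all $i$ by symmetry. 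For the square $P_4$ this is just three cases (same side, adjacent side, opposite side); for $P_6$ four cases; for $P_8$ five cases. Each $E_{0j}$ is an explicit rational number (for $P_4,P_6$) or an algebraic number (for $P_8$, because distances in the $\ell^\infty$-type octagon norm involve $\sqrt2$), obtained by the corner-values-plus-cutline bookkeeping.

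For the general $P_x$ formula and for $E(S^1)$, the approach is to pass to the limit / to an integral over angular separation. Fix a point $\X$ on the unit circle and, using rotational symmetry of the cone measure (which is uniform on $S^1$), write $E(S^1)=\frac{1}{2\pi}\int_0^{2\pi}\normL{\X-\Y(\phi)}\,d\phi$ where $\Y(\phi)$ is the point at angular position $\phi$; since $\|\X-\Y(\phi)\|=2\sin(\phi/2)$ in the Euclidean norm, this integral evaluates to $\frac{1}{2\pi}\int_0^{2\pi}2\sin(\phi/2)\,d\phi=\frac{4}{\pi}$, giving the last line. For $P_x$, the same idea applies but now $\X-\Y$ must be measured in the polygonal norm, so the integrand is $2\sin(\phi/2)$ divided by the polygonal norm of the unit vector in the direction $\X-\Y$ (i.e.\ the support-type function of $P_x$ evaluated in that direction). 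One then splits the range of $\phi$ according to which edge of $P_x$ the chord direction falls in; on each such sub-interval the polygonal norm of a unit direction vector is $\cos(\text{deflection from the edge normal})^{-1}$ up to the inradius, so the integrand becomes an elementary trigonometric expression, and the integral over each sector contributes equally by symmetry. Carrying out the elementary integral of $\sin(\phi/2)\cdot(\text{linear-in-}\cos)$ over a sector of width $2\pi/x$ and multiplying by $x$ produces the stated closed form, with the two cases $x\in4\N$ versus $x\in4\N+2$ arising from whether a vertex direction or an edge-midpoint direction is aligned with the line through $\X$ and $-\X$ (this parity controls the phase of the sector decomposition). The strict inequalities $E(P_8)<\frac{23}{18}$, $\frac{23}{18}<\frac43$, and $\frac4\pi<E(P_x)$ are then just numerical comparisons of the explicit values, plus the monotonicity intuition that more sides means closer to the circle.

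The main obstacle I expect is the careful edge-by-edge case analysis in the polygonal-norm computation: one must correctly identify, for each angular separation $\phi$, which face of the polytope the difference vector $\X-\Y$ passes through, track the breakpoints (vertex directions as seen from the chord), and assemble the trapezoidal pieces $l_{i0}, l_{i1}, d_{ij}$ of the cutline algorithm without sign or indexing errors — and then verify that after summing over all symmetry classes the telescoping of boundary contributions is exact. A secondary subtlety is confirming that for $P_4$ and $P_6$ the polytopes coincide (up to the linear transformations noted, which do not change $E$ by the affine-invariance remark $E(L)=E(TL)$) with the word-metric limit shapes for the stated generating sets, so that the group-theoretic equalities $E(\Z^2,\pm\{\E_1,\E_2\})=E(P_4)$ and $E(\Z^2,\pm\{\E_1,\E_2,\E_1+\E_2\})=E(P_6)$ follow from the reduction $E(\Z^d,S)=E(L)$ established in \S\ref{limit-measure}. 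Everything else is a routine—if lengthy—evaluation of linear integrals, exactly the kind of computation the cutline algorithm was designed to mechanize.
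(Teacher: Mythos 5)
Your treatment of the finite cases is essentially the paper's: for $P_4$, $P_6$, $P_8$ the paper likewise uses the dihedral symmetry to reduce to the side-pair averages $a_j$ from a fixed side $\sigma_1$, evaluates each by the cutline algorithm, and takes the (uniformly weighted) average; and the identification of $P_4$ and $P_6$ with the limit shapes of the two stated generating sets via the affine invariance $E(L)=E(TL)$ is exactly the intended justification of the group-theoretic equalities. Your computation of $E(S^1)=\frac1{2\pi}\int_0^{2\pi}2\sin(\phi/2)\,d\phi=\frac4\pi$ is also fine, since the circle really is rotation-invariant.

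The gap is in your derivation of the general $P_x$ formula. You propose to carry the circle argument over verbatim: fix one point $\X$, parametrize the second point by its angular position $\phi$, and compute a single integral $\frac1{2\pi}\int_0^{2\pi}(\cdots)\,d\phi$. This fails for a polygon on two counts. First, $P_x$ is only invariant under the dihedral group of order $2x$, not under all rotations, so the inner average $\Y\mapsto\int\normL{\X-\Y}\,d\muL(\X)$ is genuinely non-constant on $P_x$ (it differs at a vertex versus at an edge midpoint); you cannot reduce the double integral to a single one by fixing $\X$, and in particular the claim that ``the integral over each sector contributes equally by symmetry'' has no fixed-$\X$ justification. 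Second, even with $\X$ fixed, the cone measure $\muL$ on a polygon is proportional to arclength on each side, not to subtended angle, so $d\phi$ is the wrong measure (the density is $\sec^2$ of the deflection from the edge normal). There is also a reciprocal slip in your conversion to the polygonal norm: $\normL{\v}$ equals the Euclidean length of $\v$ \emph{times} the $L$-norm of the unit vector in its direction, i.e.\ divided by the radius function of $L$, which for the regular polygon is $\cos(\text{deflection})$ over the inradius, not its inverse. The paper avoids all of this by staying entirely within the cutline framework for general $x$: it expresses each side-pair average $a_j$ in terms of the $L$-norm chordlengths $\ell_i=d(\V_1,\V_i)$, writes those as trigonometric functions of $\pi/x$, and finishes with trigonometric identities, the $4\N$ versus $4\N+2$ dichotomy entering through how opposite sides align. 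A corrected continuous argument would need the full double integral with the arclength density, at which point it is no simpler than the paper's finite computation. Finally, be careful invoking ``more sides means rounder'' for the inequalities: the paper itself notes that $E(P_x)$ is \emph{not} monotone in $x$ (the $4\N+2$ values dip below the neighboring $4\N$ values), so the stated inequalities must be checked numerically from the closed forms, as you in fact do.
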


Proposition~\ref{polygons}, shown in Figure~\ref{Z2numberline} below, 
shows of the nontrivial dependence of sprawl on the choice of generating set.
Since the word metrics of a group $G$ with respect to finite generating sets $S,S'$ 
are quasi-isometric, we see that sprawl is not a quasi-isometry invariant.

To prove the formula for regular polygons, one can set $a_j$ for the average 
distance from $\sigma_1$ to $\sigma_j$ and re-express that using the chordlengths
$\ell_i=d(\V_1,\V_i)$, by the cutline algorithm.  The $\ell_i$ themselves can then 
be written as trigonometric functions of $\pi/x$.  Trigonometric identities 
finish the proof, since $E(P_x)$ is the weighted average of the $a_j$.  

We note that the formulas for regular polygons each converge quickly to $4/\pi$, and track
close together.  Writing $E_{4\N}(x)$ for a function whose values agree with 
$E(P_x)$ when $x\in 4\N$, and likewise $E_{4\N+2}(x)$, we have:

$$E_{4\N}(x) - \frac{4}{\pi} \sim \frac{16\pi^3}{45 x^4}, \qquad 
E_{4\N+2}(x) - \frac{4}{\pi} \sim \frac{17\pi^3}{90 x^4}, \qquad
E_{4\N}(x)-E_{4\N+2}(x) \sim \frac{\pi^3}{6x^4}.$$

\begin{figure}[ht]
\begin{tikzpicture}[scale=8]

\begin{scope}[xshift=-14.1cm,yshift=-.4cm]
\begin{scope}[scale=12]
\draw [->](4/pi - .05,.0075)--(4/3 +.05,.0075);
\draw [<-](4/pi - .05,.0075)--(4/3 +.05,.0075);
    \draw [fill, fill=red!20!white, rounded corners=.1cm,opacity=.5]
      (4/pi,0) rectangle (4/3,.015);
\draw (1.25,0.005)--(1.25,0.01) node [above] {\scriptsize $1.25$};
\draw (1.35,0.005)--(1.35,0.01) node [above] {\scriptsize $1.35$};
\draw (4/pi,.015)--(4/pi,0);
\draw (4/pi -.01,-.01) -- (4/pi,0);
\draw (1.27326455338546,.015)--(1.27326455338546,0);
\draw (1.27382605804159,.015)--(1.27382605804159,0);
\draw (1.276142374915397,.015)--(1.276142374915397,0);
\draw (1.2745,-.01) -- (1.276142374915397,0);
\draw (1.286,-.01) -- (23/18,0);
\draw (23/18,.015)--(23/18,0);
\draw (4/3,.015)--(4/3,0) ;
\draw (4/3-.002,-.01) -- (4/3,0);
\draw (4/3+.01,-.01) to (4/3,0);
\draw [fill=gray!20, fill opacity=.5](4/pi -.01,-.01) circle (.005);
\node[draw,regular polygon,regular polygon sides=8,scale=3,fill=gray!20, fill opacity=.5] at (1.2745,-.01){};
\draw  [fill=gray!20, fill opacity=.5] (1.286,-.01)+(-.005,-.005) -- +(0,-.005) 
--+(.005,0) --+(.005,.005) -- +(0,.005) -- +(-.005,0) -- cycle;
\draw [fill=gray!20, fill opacity=.5] (4/3-.002,-.01)+(.005,0)--+(0,.005)--+(-.005,0)--+(0,-.005)--cycle;

\draw   [fill=gray!20, fill opacity=.5] (4/3+.01,-.01)+(-.005,-.005)
--+(.005,-.005)--+(.005,.005)--+(-.005,.005)--cycle;
\end{scope}
\end{scope}

\end{tikzpicture}
\caption{Range of sprawls known for $\Z^2$.}\label{Z2numberline}
\end{figure}
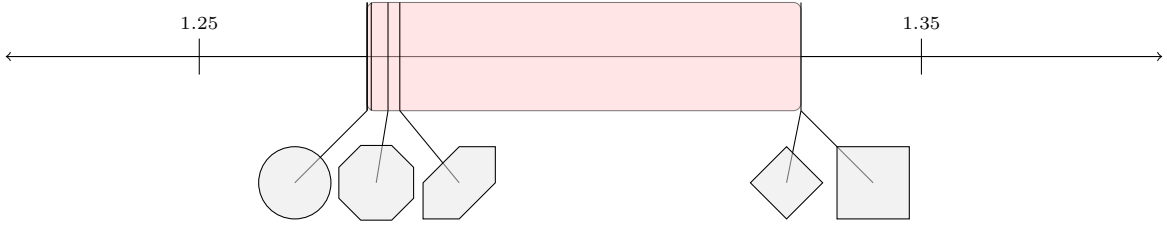

Using these proposition above 
and a rational approximation argument, we observe a range of sprawls that can be achieved in $\Z^2$.

\begin{corollary}\label{ratapprox}
 A dense subset of the interval $\left[ \frac 4 \pi , \frac 43 \right]$ is contained in the 
set $\{E(\Z^2,S) : \hbox{\rm gensets} ~ S\}$.
\end{corollary}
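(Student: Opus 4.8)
The plan is to exhibit, for a dense set of values $\alpha \in [4/\pi, 4/3]$, a perimeter $L$ arising from a genuine generating set of $\Z^2$ with $E(L)$ close to $\alpha$, and then use continuity of the sprawl together with the rational-approximation of convex bodies by rational polytopes. First I would observe that Proposition~\ref{polygons} already gives us an infinite discrete family of values: the square $P_4$ realizes $4/3$ exactly (as $\pm\{\E_1,\E_2\}$), the hexagon $P_6$ realizes $23/18$ exactly, and the values $E(P_x)$ for $x\in 4\N$ and $x\in 4\N+2$ converge monotonically to $4/\pi$. So the \emph{closure} of $\{E(\Z^2,S)\}$ already contains $4/\pi$ and the endpoint $4/3$. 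The real content is filling in the interior densely, and for that I would interpolate between polygons by taking generating sets whose convex hulls are polygons ``between'' a $k$-gon and a $(k{+}1)$-gon, or more simply between the square and a finely-cut approximation of the circle.

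The key technical input is a \emph{continuity} statement: if a sequence of perimeters $L_n$ converges to $L$ (say in Hausdorff distance, after normalizing so that all contain a fixed ball and are contained in a fixed ball), then $E(L_n)\to E(L)$. This should follow because both ingredients in $E(L)=\int_{L^2}\normL{\X-\Y}\,d\muL^2$ vary continuously: the Minkowski norm $\normL{\cdot}$ depends continuously on $L$ uniformly on compact sets away from the origin, and the cone measure $\muL$ (being, up to normalization, the pushforward of Lebesgue measure on the solid body under radial projection, or equivalently proportional to the ``cone over a boundary piece has this much volume'' assignment) depends weakly-continuously on $L$. Since the integrand is bounded (by $2$, after the affine normalization that puts things on scale), dominated convergence gives the claim. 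I would state this as a short lemma. Granting it, the family $E(P_x)$ shows $4/\pi$ is a limit point; and one can rotate/shear or pass through intermediate polytopes to get a path of perimeters from (a linear image of) $P_4$ through polygons approximating $S^1$, along which $E$ varies continuously, hence takes every value in an interval whose closure contains both $4/\pi$ and $4/3$.

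The last step is the rational-approximation argument flagged in the statement: an arbitrary perimeter $L$ (in particular any polygon along our interpolating path) need not itself be the convex hull of a finite symmetric subset of $\Z^2$, but it can be Hausdorff-approximated arbitrarily well by rational polygons, i.e.\ polygons with vertices in $\frac1N\Z^2$ for large $N$; scaling by $N$, such a polygon is the convex hull of a symmetric set of lattice points, hence (after checking this set generates $\Z^2$ --- which we can always arrange by throwing in $\pm\E_1,\pm\E_2$, a perturbation that moves $L$ by an amount $\to 0$ relative to scale) equals the limit shape $L$ of an honest generating set $S$ of $\Z^2$. Combining with the continuity lemma, $E(\Z^2,S)=E(L)$ can be made to approximate any prescribed value in the interpolated interval, and since that interval is dense in $[4/\pi,4/3]$ (its closure contains both endpoints, by the polygon formulas), we conclude density of $\{E(\Z^2,S)\}$ in $[4/\pi,4/3]$.

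The main obstacle I anticipate is the continuity lemma for the cone measure under Hausdorff convergence of perimeters: one must be careful that when a polygon degenerates (a side shrinking, two vertices colliding), the cone measure does not concentrate or vanish pathologically, and that the weights $w_i=\muL(\sigma_i)$ in the cutline formula vary continuously. This is true but needs the explicit description of $\muL$ (area of the cone over a boundary arc) rather than an abstract argument; everything else --- the polygon formulas, the existence of generating sets realizing rational polygons, dominated convergence for the bounded integrand --- is routine once that lemma is in hand.
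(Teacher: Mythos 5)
Your argument is essentially the paper's own proof: a continuous path of perimeters from the circle to the square, continuity of $E(L)$ in the perimeter (norm and cone measure both varying continuously, with the bounded integrand handled by dominated convergence), then Hausdorff approximation by rational polygons, rescaling to integer polygons (which leaves $E$ unchanged), and completion to an honest generating set without altering the convex hull. One inessential slip: the convergence of $E(P_x)$ to $4/\pi$ is in fact \emph{not} eventually monotone (the paper notes $E_{4\N}(x)-E_{4\N+2}(x)\sim \pi^3/6x^4>0$, so the $4\N+2$-gons are ``rounder''), but your argument only uses convergence, not monotonicity.
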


\begin{proof} 
There is a continuous path $L_t$ through the space of perimeters
that starts with the circle and ends with the square.  The sprawl passes through all 
values from $4/\pi$ to $4/3$ along the path.

Any such perimeter $L_t$ 
can be approximated arbitrarily closely by a rational polygon, which can be rescaled 
to an integer polygon without changing $E$.  The sprawl of a polygon is continuous
in the coordinates of its vertices, and $E$ of the approximants approaches $E$ of the original
body.  Finally, the set of integer vertices can be completed to a generating set without 
changing $E$, since the sprawl only depends on the extreme vertices.  
\end{proof}

\subsection{Hexagons}

Let $H_{x,y}$ be the hexagon with vertices
$\V_1 = (x,y)$, $\V_2 = (1,1)$, 
$\V_3 = (-1,1)$, where 
$x \geq 1$,  $y \geq 0$, and $x + y \leq 2$.
Thus $H_{1,0}$ is a square (realized as a degenerate hexagon)
and $H_{2,0}$ is a linear transform of the regular hexagon, giving
$$E(H_{1,0})=\frac 43, \qquad E(H_{2,0})=\frac{23}{18}.$$

\begin{lemma}[Parametrizing hexagons]
Every convex, centrally symmetric hexagon is equivalent by a linear transformation 
to some $H_{x,y}$.
\end{lemma}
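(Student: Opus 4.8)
The plan is to normalize two consecutive vertices of the hexagon by a linear map, and then use the remaining symmetry of the normalized configuration to push the third vertex into the triangle $\{x\ge1,\ y\ge0,\ x+y\le2\}$.

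First I would fix notation. A convex, centrally symmetric hexagon $C$ has vertex set $\{\pm\V_1,\pm\V_2,\pm\V_3\}$ occurring in the cyclic order $\V_1,\V_2,\V_3,-\V_1,-\V_2,-\V_3$ around $\partial C$. Since $C$ is convex and centrally symmetric, the origin lies in the interior of $C$, so no edge of $C$ lies on a line through $0$; in particular the consecutive vertices $\V_2,\V_3$ are linearly independent. Let $T\in GL_2(\R)$ be the unique linear map with $T\V_2=(1,1)$ and $T\V_3=(-1,1)$ — post-composing with a symmetry of the square $[-1,1]^2$ if necessary so that the induced cyclic order agrees with that of $H_{x,y}$. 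Then $TC$ is a convex, centrally symmetric hexagon containing the square $[-1,1]^2=\mathrm{conv}\{(\pm1,\pm1)\}$ and having $(1,1),(-1,1)$ among its vertices, with its third antipodal pair $\pm(x,y)$ placed so that $(x,y)$ lies, in cyclic order, between $(1,-1)$ and $(1,1)$. Convexity of $TC$ at the vertices $(x,y)$, $(1,1)$, and $(1,-1)$ then gives, respectively, $x\ge1$, $y\le1$, and $y\ge-1$ (the first of these is where the requirement that $(x,y)$ lie beyond the square's edge $[(1,-1),(1,1)]$ enters). Thus $TC=H_{x,y}$ for some $(x,y)$ in the half-strip $V=\{x\ge1,\ |y|\le1\}$.

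Next I would produce the two symmetries of this normalized picture needed to move $V$ into the target triangle $\Delta=\{x\ge1,\ y\ge0,\ x+y\le2\}$. The reflection $(X,Y)\mapsto(X,-Y)$ preserves the square and carries $H_{x,y}$ to $H_{x,-y}$, realizing $(x,y)\mapsto(x,-y)$ on parameters. For the second, I would re-run the normalization using the consecutive pair $\{(1,-1),(x,y)\}$ in place of $\{(1,1),(-1,1)\}$; a short computation shows the resulting linear map carries $H_{x,y}$ to $H_{\phi(x,y)}$, where
$$\phi(x,y)=\left(\frac{x-y+2}{x+y},\ \frac{x-y-2}{x+y}\right).$$
Then I would record the elementary facts: $\phi$ maps $V$ into $V$; the coordinate sum of $\phi(x,y)$, namely $\frac{2(x-y)}{x+y}$, is $\le2$ exactly when $y\ge0$; and when $x+y>2$ the coordinate difference $\frac{4}{x+y}$ of $\phi(x,y)$ is $<2$. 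Consequently, given any $(x,y)\in V$: first apply $(x,y)\mapsto(x,|y|)$ to arrange $y\ge0$; if already $x+y\le2$ we are in $\Delta$; otherwise apply $\phi$, which keeps the first coordinate $\ge1$ and makes the coordinate sum $\le2$, and if the resulting $y$-coordinate is negative flip its sign once more — this last flip keeps the coordinate sum $\le2$ by the bound just noted. In every case we land in $\Delta$, which proves the lemma. Degenerate hexagons (parallelograms, where $x=1$, and the cases $y=\pm1$) are handled by the identical argument and correspond to $\partial\Delta$.

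The genuinely new content is the map $\phi$ together with the verification that it, combined with the sign flip, sweeps the whole half-strip $V$ onto the triangle $\Delta$; everything else is bookkeeping about which consecutive pair to normalize against. The main obstacle I anticipate is precisely that bookkeeping: keeping the cyclic-order and orientation data honest, so that each linear map actually outputs an $H_{x',y'}$ in the standard form asserted (rather than a reflected copy), which is what the "post-compose with a square symmetry" clauses are there to fix.
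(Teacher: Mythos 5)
Your proof is correct and follows essentially the same route as the paper: normalize $\V_2\mapsto(1,1)$, $\V_3\mapsto(-1,1)$, then use the freedom of re-choosing which adjacent vertex pair to normalize (plus a reflection) to push $\V_1$ into the triangle $x\ge1$, $y\ge0$, $x+y\le2$. The only difference is that you compute explicitly, via the map $\phi$, the re-normalization step that the paper delegates to its Figure with the phrase ``otherwise, change the choice of $\V_2,\V_3$,'' and your verification of that step checks out.
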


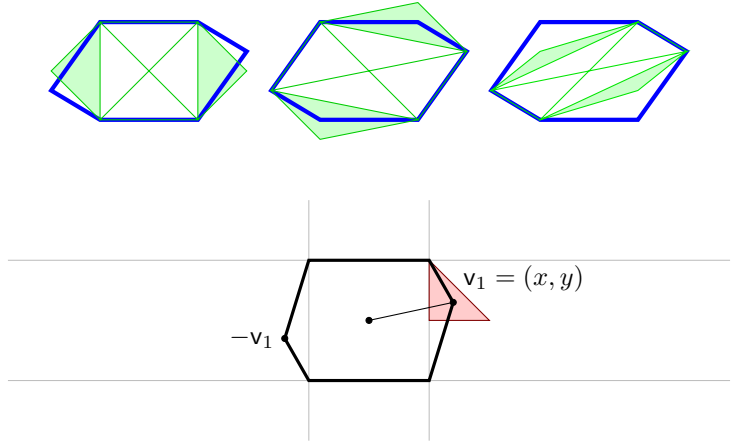
\begin{figure}[ht]
\begin{tikzpicture}[scale=0.65]
\pgfsetbaseline{0.65cm}
\begin{scope}
\fill [fill=green!20!white] (2,0) -- (1,1) -- (1,-1) -- cycle;
\fill [fill=green!20!white] (-2,0) -- (-1,-1) -- (-1,1) -- cycle;
\draw [ultra thick,blue]
  (-1,-1) -- (1,-1) -- (2,0.4) -- (1,1) -- (-1,1) -- (-2,-0.4) -- cycle;
\draw [thin,green!80!black]
  (-1,-1) -- (1,-1) -- (1,1) -- (-1,1) -- (-1,-1) -- (1,1) -- (2,0)
  -- (1,-1) -- (-1,1) -- (-2,0) -- cycle;
\end{scope}
\begin{scope}[xshift=4.5cm]
\fill [fill=green!20!white] (2,0.4) -- (-1,1) -- (1,1.4) -- cycle;
\fill [fill=green!20!white] (-2,-0.4) -- (1,-1) -- (-1,-1.4) -- cycle;
\draw [ultra thick,blue]
  (-1,-1) -- (1,-1) -- (2,0.4) -- (1,1) -- (-1,1) -- (-2,-0.4) -- cycle;
\draw [thin,green!80!black]
  (1,-1) -- (2,0.4) -- (-1,1) -- (-2,-0.4) -- (1,-1) -- (-1,1)
  -- (1,1.4) -- (2,0.4) -- (-2,-0.4) -- (-1,-1.4) -- cycle;
\end{scope}
\begin{scope}[xshift=9cm]
\fill [fill=green!20!white] (-2,-0.4) -- (1,1) -- (-1,0.4) -- cycle;
\fill [fill=green!20!white] (2,0.4) -- (-1,-1) -- (1,-0.4) -- cycle;
\draw [ultra thick,blue]
  (-1,-1) -- (1,-1) -- (2,0.4) -- (1,1) -- (-1,1) -- (-2,-0.4) -- cycle;
\draw [thin,green!90!black]
  (2,0.4) -- (1,1) -- (-2,-0.4) -- (-1,-1) -- (2,0.4) -- (-2,-0.4) -- 
  (-1,0.4) -- (1,1) -- (-1,-1) -- (1,-0.4) -- cycle;
\end{scope}
\end{tikzpicture}

\vspace{.3in}

\begin{tikzpicture}[scale=0.8]
\pgfsetbaseline{1.4cm}
  \draw[gray!50] (-6,1)
  --(6,1);
  \draw[gray!50] (-6,-1) 
  --(6,-1);
    \draw[gray!50] (-1,2)--(-1,-2);
  \draw[gray!50] (1,2) --(1,-2);
\filldraw [fill=red!20!white,draw=red!50!black]
  (1,0) -- (2,0) -- (1,1) -- cycle;
\draw [very thick]
  (1,-1)   -- (1.4,0.3) node [above right] {$\V_1 = (x,y)$}
  -- (1,1)   -- (-1,1)  -- (-1.4,-0.3) node [below=0.05pt, left = 0.05pt] {$-\V_1$}
  -- (-1,-1)  -- cycle;
  \filldraw (1.4,0.3) circle (0.05);
    \filldraw (-1.4,-0.3) circle (0.05);
        \filldraw (0,0) circle (0.05);
\draw [thin] (0,0) -- (1.4,0.3);
\end{tikzpicture}

\caption{Hexagon reduction.  On top we have shown the choices of which pair 
of sides to map to horizontal; the middle figure has $\V_1$ in the desired position.}
\label{hexagon-reduction}
\end{figure}

\begin{proof}
Take a hexagon with vertices $\V_1,\V_2,\V_3,-\V_1,-\V_2,-\V_3$.
We can always find a linear transformation sending $\V_2 \mapsto (1,1)$ and 
$\V_3\mapsto (-1,1)$.  This reduces the parameter space to 
$\{\V_1=(x,y) : x \geq 1, -1 \leq y \leq 1\}$.
Also, without loss of generality, we have $x + |y| \leq 2$; otherwise, change the choice
of $\V_2$,$\V_3$, as in Figure~\ref{hexagon-reduction}.
Finally, up to reflection in one of the coordinate axes, we can assume $y \geq 0$.
\end{proof}

Applying the algorithm sketched above, we can compute the side-pair 
averages, and obtain the following formula for a hexagon parametrized 
as above.

$$E(H_{x,y}) = \frac{x^2y^2 + xy^3 + 4x^3 + 7x^2 + 4x^2y - y^3 + 7xy - y^2 + 4x + 5y + 1}{3x^3 + 3x^2y + 6x^2 + 6xy + 3x + 3y}.$$

Thus we have reduced the task of bounding the sprawl of hexagons to a calculus exercise (which we omit):
verifying that in the domain defined by
$x\geq 1$, $y\geq 0$, and $x + y\leq 2$, this quantity
takes values between $23/18$ and $4/3$.

This establishes the following statement:
\begin{theorem}[Sprawls of hexagons and three-generator presentations]
$$\{ E(H) : ~\hbox{\rm hexagons}~ H\} = \left[ \frac{23}{18}, \frac 43 \right].$$
Thus,  $\frac{23}{18} \le E(\Z^2,S) \le \frac 43$ whenever $|S|\le 6$.
\end{theorem}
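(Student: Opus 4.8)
The plan is to push everything through the explicit rational function $E(H_{x,y})$ displayed above, so that the theorem reduces to two polynomial inequalities on a triangle together with one application of the intermediate value theorem. \emph{Reduction.} If $S$ is a symmetric generating set for $\Z^2$ with $|S|\le 6$, then $L=\partial\,\mathrm{conv}(S)$ bounds a centrally symmetric convex body whose extreme points occur in antipodal pairs and number at most $6$; since $L$ has nonempty interior there are either $4$ of them (so $L$ is a parallelogram, linearly equivalent to the square $H_{1,0}$) or $6$ (so $L$ is a genuine hexagon). By the Parametrizing Hexagons lemma together with the affine invariance $E(L)=E(TL)$, in either case $E(\Z^2,S)=E(L)=E(H_{x,y})$ for some $(x,y)$ in the triangle $D=\{\,x\ge 1,\ y\ge 0,\ x+y\le 2\,\}$, and conversely every hexagon equals some $H_{x,y}$ up to linear equivalence. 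Hence the theorem is equivalent to determining the range of $F(x,y):=E(H_{x,y})$ on $D$.

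\emph{The formula.} I would derive the displayed expression for $F$ by running the cutline algorithm of \S\ref{cutline} on $H_{x,y}$. Up to central symmetry there are three pairs of sides; for each pair one parametrizes by the unit square, cuts it along the at most two vertex directions into triangles, records the corner values $d_{ij}$ and cutline-endpoint values $l_{ij}$ (all of them $L$-norms of differences of the vertices $\pm\V_1,\pm\V_2,\pm\V_3$, hence explicit rational functions of $x$ and $y$), and averages over triangles to obtain the side-pair averages $E_{ij}$. Forming $\sum_{i,j}w_iw_jE_{ij}\big/\sum_{i,j}w_iw_j$ with the cone weights $w_i\propto\tfrac12|\V_i\times\V_{i+1}|$ and simplifying yields the stated rational function; this step is entirely mechanical. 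Note for later use that the denominator $3x^3+3x^2y+6x^2+6xy+3x+3y$ is strictly positive on $D$, since each monomial is nonnegative there and $3x\ge 3$.

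\emph{The bounds --- main obstacle.} The core of the argument, and the step I expect to be the real work, is showing $\tfrac{23}{18}\le F(x,y)\le\tfrac43$ throughout $D$, with equality at $(2,0)$ and $(1,0)$ respectively. Since the denominator is positive on $D$, each bound is equivalent to a polynomial inequality $P_\pm(x,y)\ge 0$ on $D$. I would substitute $x=1+s$, turning $D$ into the standard triangle $\{\,s\ge 0,\ y\ge 0,\ s+y\le 1\,\}$, and certify each $P_\pm$ by exhibiting it as a sum, with nonnegative coefficients, of monomials $s^a y^b$ and of $(1-s-y)$ times such monomials --- an elementary manifestly-nonnegative decomposition playing the role of a Positivstellensatz certificate. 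As a fallback one checks the three edges of $D$ directly, reducing $F$ to a one-variable rational function on each, and excludes interior extrema by a sign analysis of $P_\pm$ (or of $\partial F/\partial y$). This is the ``calculus exercise'' alluded to above: routine, but it must be done with care, and finding the right nonnegativity certificate is the only genuinely delicate point.

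\emph{Surjectivity and conclusion.} Finally, $D$ is connected and $F$ is continuous with $F(2,0)=\tfrac{23}{18}$ and $F(1,0)=\tfrac43$, so by the intermediate value theorem $F$ attains every value in $[\tfrac{23}{18},\tfrac43]$ --- already along the edge $y=0$. Combined with the Reduction this gives $\{E(H):\text{hexagons }H\}=[\tfrac{23}{18},\tfrac43]$; and since any $S$ with $|S|\le 6$ has $E(\Z^2,S)=F(x,y)$ for some $(x,y)\in D$, also $\tfrac{23}{18}\le E(\Z^2,S)\le\tfrac43$.
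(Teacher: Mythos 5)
Your proposal follows essentially the same route as the paper: reduce via the Parametrizing Hexagons lemma and affine invariance to the triangle $D$, derive the rational formula $E(H_{x,y})$ by the cutline algorithm, verify the two bounds as polynomial inequalities on $D$ (the paper likewise defers this as an omitted ``calculus exercise''), and obtain surjectivity onto $[\frac{23}{18},\frac43]$ by continuity. Your additional remarks --- the explicit treatment of $|S|\le 6$ via the $4$-or-$6$ extreme-point dichotomy, the intermediate value theorem along the edge $y=0$, and the sketch of a nonnegativity certificate for the bounds --- are correct elaborations of steps the paper leaves implicit rather than a different argument.
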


This provides evidence, taken together with the fast convergence for sprawls 
of regular polyhedra
towards $4/\pi$, for the following conjecture.

\begin{conjecture}[Sprawl Conjecture for $d=2$] The circle and the square are the extreme cases for all 
 perimeters in $\R^2$.
That is, 
$$\{E(L) : ~\hbox{\rm perimeters}~ L\subset\R^2 \} = 
\left[ \frac{4}{\pi}, \frac 43 \right].$$
\end{conjecture}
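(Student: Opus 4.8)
The plan is to identify the extremizers of the affine-invariant functional $E$ on the space $\mathcal{P}$ of perimeters in $\R^2$. Since $E(L)=E(TL)$ for every $T\in GL_2(\R)$, one works on the quotient $\mathcal{P}/GL_2(\R)$; equipped with the Banach--Mazur topology this quotient is compact, and the continuity of $E$ used in Corollary~\ref{ratapprox} should extend to it (the norm $\normL{\cdot}$ varies continuously and the cone measures $\muL$ converge weakly under Hausdorff convergence of bodies), so both $\sup E$ and $\inf E$ are attained. It then remains to show that the maximum is attained exactly at parallelograms, where $E=\tfrac43$, and the minimum exactly at ellipses, where $E=E(S^1)=\tfrac4\pi$ by affine invariance and Proposition~\ref{polygons}.

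For the upper bound I would first reduce to polygons: polygons are dense in $\mathcal{P}$ and $E$ is continuous, so $\sup_{\mathcal P}E=\sup_{\text{polygons}}E$. Then one argues by induction on the number of sides. The cutline algorithm of \S\ref{cutline} expresses $E$ of a symmetric $n$-gon as an explicit rational function of its vertices (as in the hexagon formula $E(H_{x,y})$), and the configuration space of centrally symmetric $n$-gons modulo $GL_2(\R)$ is a bounded polytope whose proper faces consist of polygons with fewer sides. The key lemma would be that along a suitable one-parameter family of symmetric polygons in which the vertices move linearly --- a \emph{shadow system} in the sense of Rogers--Shephard and Campi--Gronchi --- the value $E$ is convex, or at least attains its maximum at an endpoint of the family. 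Iterating such deformations drives any $n$-gon toward the boundary of configuration space, lowering $n$, until one reaches the base case $n\le 6$, already handled by the theorem on sprawls of hexagons above; and the equality analysis there (only the degenerate hexagon, a parallelogram, attains $\tfrac43$) together with the affine classification of parallelograms would pin down the equality case.

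For the lower bound, reduction to polygons is unavailable since the minimizer is the smooth ellipse. Here I would instead run a symmetrization argument in the spirit of the Blaschke--Santal\'o inequality: show that a Steiner-type symmetrization of the convex body $Q$ bounded by $L$ about a line through the origin does not increase $E$, and invoke the classical fact that iterating symmetrizations in a dense set of directions converges in Hausdorff distance to a Euclidean ball. An alternative, more analytic route is local: parametrize perimeters near the circle by radial functions $r(\theta)=1+\varepsilon f(\theta)$ with $f(\theta+\pi)=f(\theta)$ and with the frequency-$2$ component removed (that component being absorbed by an affine change of coordinates, since to first order it produces an ellipse). Because $E$ is $O(2)$-invariant and scale-invariant and the circle is the unique fixed configuration, the first variation of $E$ at the circle vanishes, so one must compute the second variation and show the resulting quadratic form is positive definite on all remaining Fourier modes, making the circle a strict local minimum; one would then need a global argument --- for instance that any minimizer must be smooth and strictly convex (a corner or a flat arc can be rounded off to strictly decrease $E$), hence a critical point, and that the ellipse is the only critical point of local-minimum type.

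The main obstacle in both directions is the presence of the \emph{cone measure} alongside the $L$-norm distance: the functional $E=\int_{L^2}\normL{\X-\Y}\,d\muL^2(\X,\Y)$ couples two quantities that transform differently under shadow deformations and under symmetrization, so the standard integral-geometric and symmetrization toolkit does not apply verbatim --- when $Q$ is deformed, both the norm and the redistribution of cone measure must be controlled simultaneously. Concretely the hard steps are (i) establishing the convexity or endpoint-maximality of $E$ along shadow systems of polygons for the upper bound, and (ii) establishing monotonicity of $E$ under Steiner symmetrization (or, in the analytic route, sign-definiteness of the second variation plus the passage from a local to a global statement) for the lower bound. The strong structural parallel with the Mahler volume product --- whose $d=2$ analogue, that the square minimizes and the ellipse maximizes, is a theorem of Mahler and of Blaschke--Santal\'o --- suggests that the shadow-system and symmetrization machinery developed there is the natural place to look, even though $E$ is not a volume and those proofs do not transfer directly.
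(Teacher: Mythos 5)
The statement you are addressing is labeled a \emph{conjecture} in the paper, and the paper offers no proof of it --- only partial evidence: the exact computation of sprawls of all centrally symmetric hexagons (giving the pinching $\frac{23}{18}\le E(\Z^2,S)\le\frac43$ whenever $|S|\le 6$), the closed-form values for regular polygons converging to $\frac4\pi$, and the high-dimensional asymptotics of \S\ref{high-rank}. So your proposal is not being measured against an existing argument, and, as you yourself concede, it is a program rather than a proof. The compactness-and-continuity step is fine (the Banach--Mazur compactum together with weak convergence of cone measures does give attainment of the extremes, and density of rational polygons is already exploited in Corollary~\ref{ratapprox}), and reducing the upper bound to an induction on the number of sides with the paper's hexagon theorem as base case is an attractive way to organize the problem. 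But the two lemmas that carry all the weight --- (i) convexity, or endpoint-maximality, of $E$ along shadow systems of symmetric polygons, and (ii) monotonicity of $E$ under Steiner symmetrization (or, in the analytic route, positive-definiteness of the second variation at the circle plus a local-to-global argument) --- are precisely the open content of the conjecture, and nothing in your sketch makes progress on them.

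There is, moreover, concrete evidence in the paper that these lemmas are delicate. The functional $E(L)=\int_{L^2}\normL{\X-\Y}\,d\muL^2$ is \emph{not} monotone in any naive notion of roundness: the paper observes that regular polygons with $4k-2$ sides are ``rounder'' than regular polygons with $4k$ sides for all $k\ge 4$, whereas Mahler volume is monotone in the number of sides for regular polygons. Any symmetrization or deformation scheme imported from the Blaschke--Santal\'o/Mahler toolkit must survive this non-monotonicity; a Steiner-type step that works for volume products may fail for $E$ because, as you correctly note, the cone measure and the $L$-norm transform in coupled but different ways under such deformations. Your second-variation route also needs care at the local-to-global passage: even granting that the circle is a strict local minimum modulo the frequency-$2$ mode, the assertions that every minimizer is smooth and strictly convex and that the ellipse is the only critical point of local-minimum type are themselves substantial claims with no argument sketched. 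In short, the architecture is reasonable and consistent with the partial results of \S\ref{plane} and \S\ref{high-rank}, but the proposal does not close the conjecture, and the steps it leaves open are exactly the ones that matter.
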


Further evidence is given in the next section, where the sphere and cube are shown to be sharp bounds
asymptotically as $d\to\infty$.

%

%
%

\section{Sprawl in $d$ dimensions:  The not-so-flatness of $\Z^d$}\label{high-rank}

In higher dimensions, the computation of expected distance between two points
becomes quite intuitive for the sphere and the cube.  Suppose $d$ is very large.
For the round unit sphere $\Sphere_d \subset \R^d$, 
which induces the Euclidean metric as its Minkowski norm, take one point to 
be at the north pole without loss of generality.  Then concentration of measure
phenomena ensure that the second point is almost surely on the equator, so the distance
between them is nearly $\sqrt{1^2+1^2}=\sqrt 2$.  On the other hand, the cube induces
the sup metric.  In this case, the distance computation is performed by sampling the
random variable $|\X_i-\Y_i|$, which ranges between 0 and 2, a total of $d$ times.  
For very large $d$, we should expect this supremum to tend to $2$.  This reasoning
predicts that $E(\Sphere_d)\to\sqrt 2$ and $E(\Cube_d)\to 2$; the former can be approximated
and the latter can be exactly realized by a word metric.  What about 
the group $\Z^d$ with its standard generating set?  In dimension 2, this is isometric to the cube
 metric, but that is no longer true for $d>2$.  In dimension 3, the limit shape for the standard
word metric is an octahedron, and more generally 
 in dimension $d$ it is the join of $d$ copies of $S^0$, called 
 an {\em orthoplex} (or {\em cross-polytope}).
We will derive the answer below, finding that $E(\Z^d,\std)\to \frac 32$.

By way of interpretation, this says that a cubical generating set gives $\Z^d$ more and more 
hyperbolic-like geometry as $d$ gets large, while the standard word metric is bounded
uniformly away (see Figure~\ref{Zdnumberline}).
We are accustomed to describing the group $\Z^d$ as ``flat" because it is quasi-isometric
to Euclidean space.  However, using this statistic that gives a finer measure
of large-scale curvature, we see that the standard generators give more of a hyperbolic
character to the group, and that there exist generators for large $d$ which make the geometry
a good deal closer to hyperbolic than flat.

In the computations below, 
recall that for natural numbers $n$, the double factorial $n!!$ denotes the product of all the natural 
numbers up to $n$ that have the same parity:
$$    n!!=\prod_{i;~0\leq 2i<n}(n-2i).$$ 
Double factorials will occur in the calculations, but they can be re-expressed in two cases:
$$(2n)!! = 2^n\cdot n! \ ; \qquad (2n+1)!! = \frac{(2n+1)!}{2^n \cdot n!} .$$

To get rates of approach, we use an approximation for $n!$ that goes one term beyond 
Stirling's formula:
$$n!=\sqrt{2\pi n}\left({\textstyle \frac ne}\right)^n \left( 1 
+{\textstyle \frac{1}{12n}} + O({\textstyle \frac{1}{n^2}})\right).$$

\subsection{The sphere}
\begin{proposition}\label{spherecalc} The sphere induces the $\ell^2$ metric on $\R^d$.
The formula for the sprawl of the sphere is given in the following closed form:
$$ E(\Sphere_d) 
=   \frac{2^{d-1}}{\sqrt \pi} \frac{\Gamma(\frac 12 d)^2}{\Gamma(d-\frac {1}2)}.$$

Thus, $E(\Sphere_d)\to \sqrt 2$ as $d\to\infty$, with 
$$\sqrt 2 - E(\Sphere_d) \sim \frac{1}{8 d}.$$
\end{proposition}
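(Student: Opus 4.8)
The plan is to use the rotational symmetry of the round sphere to collapse the defining double integral to a one-dimensional integral, recognize that integral as a Beta function, and then extract the large-$d$ behavior from Stirling's formula.

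First I would take $L=\Sphere_d$, the unit sphere $\{\,\|x\|=1\,\}\subset\R^d$, whose Minkowski norm is the Euclidean norm; its cone measure $\muL$ is rotation-invariant of total mass $1$, hence is precisely the normalized $(d-1)$-dimensional surface measure on $L$. By the reduction to convex geometry established above, $E(\Sphere_d)=\int_{L^2}\|x-y\|\,d\muL^2(x,y)$, and since $SO(d)$ acts transitively on $L$ preserving both the norm and $\muL$, I may freeze $y$ at the north pole $e_d$ and write $E(\Sphere_d)=\int_L\|x-e_d\|\,d\muL(x)$. Expanding $\|x-e_d\|^2=\|x\|^2-2\langle x,e_d\rangle+1=2-2x_d$, the integrand depends only on the last coordinate $x_d$, whose pushforward law under $\muL$ is the classical density $f(t)=\frac{\Gamma(d/2)}{\sqrt\pi\,\Gamma((d-1)/2)}(1-t^2)^{(d-3)/2}$ on $[-1,1]$ (the normalizing constant is checked against $\int_{-1}^1(1-t^2)^{(d-3)/2}\,dt=B(\tfrac12,\tfrac{d-1}2)$). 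Hence $E(\Sphere_d)=\int_{-1}^1\sqrt{2-2t}\,f(t)\,dt$.

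Next I would evaluate this integral. Writing $\sqrt{2-2t}\,(1-t^2)^{(d-3)/2}=\sqrt2\,(1-t)^{(d-2)/2}(1+t)^{(d-3)/2}$ and substituting $u=(1+t)/2$ turns it, after collecting all the powers of $2$, into $2^{d-1}\int_0^1 u^{(d-3)/2}(1-u)^{(d-2)/2}\,du=2^{d-1}B\!\bigl(\tfrac{d-1}2,\tfrac d2\bigr)=2^{d-1}\,\frac{\Gamma(\frac{d-1}2)\,\Gamma(\frac d2)}{\Gamma(d-\frac12)}$. Multiplying by the normalizing constant $\frac{\Gamma(d/2)}{\sqrt\pi\,\Gamma((d-1)/2)}$ cancels $\Gamma(\tfrac{d-1}2)$ and leaves exactly $E(\Sphere_d)=\frac{2^{d-1}}{\sqrt\pi}\,\frac{\Gamma(\frac d2)^2}{\Gamma(d-\frac12)}$; as a check this returns $4/\pi$ at $d=2$ and $4/3$ at $d=3$, consistent with Proposition~\ref{polygons}. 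For the asymptotics I would apply the Legendre duplication formula to $\Gamma(d-\tfrac12)$: writing $d-\tfrac12=2(\tfrac d2-\tfrac14)$ gives $\Gamma(d-\tfrac12)=\frac{2^{(2d-3)/2}}{\sqrt\pi}\,\Gamma(\tfrac d2-\tfrac14)\,\Gamma(\tfrac d2+\tfrac14)$, so the closed form collapses to $E(\Sphere_d)=\sqrt2\cdot\dfrac{\Gamma(\frac d2)^2}{\Gamma(\frac d2-\frac14)\,\Gamma(\frac d2+\frac14)}$, which manifestly tends to $\sqrt2$. Expanding $\log\Gamma$ about $\tfrac d2$ (equivalently, using the refined Stirling estimate quoted above, in the form of the ratio asymptotic $\Gamma(n+\alpha)/\Gamma(n+\beta)=n^{\alpha-\beta}(1+O(1/n))$ carried to second order), the first-order contributions of the symmetric shifts $\pm\tfrac14$ cancel, and the second-order term yields $\frac{\Gamma(d/2)^2}{\Gamma(d/2-1/4)\,\Gamma(d/2+1/4)}=1-\tfrac1{8d}+O(d^{-2})$; multiplying by $\sqrt2$ gives $E(\Sphere_d)\to\sqrt2$ with $\sqrt2-E(\Sphere_d)$ of order $1/d$.

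There is no conceptual obstacle here; the one real hazard is arithmetic bookkeeping. Powers of $2$ and half-integer arguments of $\Gamma$ accumulate from three places — the Beta integral, the density normalization, and the duplication formula — and it is easy to drop a factor of $\sqrt2$ when combining them, so pinning down the exact constant in the rate of approach requires care. I would guard against this by checking the closed form against the exact values at $d=2$ and $d=3$ before trusting it, and by deriving the rate from the $\log\Gamma$ Taylor expansion rather than reading it off Stirling by inspection.
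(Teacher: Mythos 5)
Your derivation is correct for the closed form and the limit, and it reaches the same one-dimensional integral as the paper: setting $t=\cos\theta$ identifies your $\int_{-1}^1\sqrt{2-2t}\,(1-t^2)^{(d-3)/2}\,dt$ with the paper's $\int_0^\pi\sqrt{2-2\cos\theta}\,\sin^{d-2}\theta\,d\theta$. Where you diverge is in the evaluation: the paper sets up Wallis-type recursions $b_{n+2}=\frac{n+1}{n+2}b_n$ and $a_{n+1}=\frac{2n+2}{2n+3}a_n$ by integration by parts, obtains double-factorial expressions with a parity case split, and only at the end converts to Gamma functions; you substitute $u=(1+t)/2$ and read the answer off as $2^{d-1}B\bigl(\frac{d-1}{2},\frac{d}{2}\bigr)$ times the normalizing constant, landing on the Gamma form in one step with no parity distinction. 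Your route is cleaner and less error-prone, and the duplication-formula rewriting $E(\Sphere_d)=\sqrt2\,\Gamma(\frac d2)^2/\bigl(\Gamma(\frac d2-\frac14)\Gamma(\frac d2+\frac14)\bigr)$ makes the limit $\sqrt2$ transparent in a way the paper's form does not.

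There is, however, one loose end, and it is not merely bookkeeping: the statement asserts $\sqrt2-E(\Sphere_d)\sim\frac{1}{8d}$, and you have not proved that. Your own expansion gives $\Gamma(\tfrac d2)^2/\bigl(\Gamma(\tfrac d2-\tfrac14)\Gamma(\tfrac d2+\tfrac14)\bigr)=1-\tfrac{1}{8d}+O(d^{-2})$ (which is correct: the second-order term is $-a^2\psi'(x)$ with $a=\frac14$, $x=\frac d2$, giving $-\frac{1}{16}\cdot\frac{2}{d}$), but after multiplying by $\sqrt2$ this yields $\sqrt2-E(\Sphere_d)\sim\frac{\sqrt2}{8d}$, not $\frac{1}{8d}$. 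You hedge by concluding only that the error is ``of order $1/d$,'' which leaves the stated constant unverified; worse, your computation, carried to its conclusion, contradicts it. A numerical check supports your expansion rather than the stated rate: at $d=100$ the closed form gives $E(\Sphere_{100})\approx1.4124291$, so $\sqrt2-E\approx1.784\times10^{-3}$, to be compared with $\sqrt2/800\approx1.768\times10^{-3}$ versus $1/800=1.25\times10^{-3}$. You should either locate an error in your expansion (there is none that I can find) or state explicitly that the correct asymptotic is $\frac{\sqrt2}{8d}=\frac{1}{4\sqrt2\,d}$ and that the constant in the proposition is off by a factor of $\sqrt2$. Do not leave the discrepancy papered over with an order-of-magnitude claim.
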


\begin{proof}
Recall that, where $A_k$ denotes the surface area of $S^k$ (so that $A_1=2\pi$ and $A_2=4\pi$),
there is a recursive formula given by $A_k = \int_0^\pi A_{k-1} \sin^{k-1}(\theta) \ d\theta.$
The distance between two points on the sphere that subtend an angle $\theta$ at the 
origin is $\sqrt{2-2\cos\theta}$.
Then we find that the $A_k$ terms cancel out, giving 
$$E(\Sphere_d) = \frac{\int_0^\pi \sqrt{2-2\cos\theta}\cdot  \sin^{d-2}(\theta) 
\ d\theta}{\int_0^\pi  \sin^{d-2}(\theta) \ d\theta},$$
which can be computed explicitly.

Let
$$a_n = \int_0^\pi \sqrt{2 - 2 \stimes \cos \theta}\stimes \sin^n 
\theta\,d\stimes \theta,\qquad\qquad
b_n = \int_0^\pi \sin^n \theta\,d\stimes \theta,$$
so that $E(\Sphere_d)=a_{d-2}/b_{d-2}$.

Integrating by parts gives  $b_{n+2} = \frac{n+1}{n+2}\stimes b_n$, so since 
$b_0=\pi$ and $b_1=2$, we get $b_n=c_n \frac{(n-1)!!}{n!!}$, with 
$c_n=\pi$ if $n$ is even and $2$ if $n$ is odd.

Change of variables and integration by parts gives the recursion 
$a_{n+1} = \frac{2\stimes n + 2}{2\stimes n+3} a_n$.
Since $a_0 = 4$, this gives $a_n=4\frac{(2n)!!}{(2n+1)!!}$.

Combining and re-indexing wtih $d=n+2$, we get
$$E(\Sphere_d)=\frac{a_{d-2}}{b_{d-2}} = e_d \stimes \frac{(2d-4)!!}{(2d-3)!!} \stimes
\frac{(d-2)!!}{(d-3)!!}$$
where $e_d$ is $4/\pi$ if $d$ is even, and $2$ if $d$ is odd.
Re-expressing the double factorials completes the proof.  Note that the use of the gamma function
enables us to drop the dependence on parity of $d$ because 
$\Gamma(z)$ is an integer for whole numbers $z$ but has $\sqrt \pi$ in the denominator for half-integers
$z$.
\end{proof}

\subsection{The cube}
\begin{proposition}\label{cubecalc}
The cube is the limit shape for $\Z^d$ with a nonstandard generating set 
$\{\pm \E_1  \cdots  \pm \E_d\}$, 
and it induces the $\ell^\infty$ metric on $\R^d$.
The formula for the sprawl of the cube is given in the following closed form:
$$E(\Cube_d) = \frac{2d+2}d - 
\left(\frac{2d+1}{2d^2}\right)
\left(\frac{4^d \ d!^2}{(2d)!}\right).$$

Thus, $E(\Cube_d)\to 2$ as $d\to\infty$, with 
$$2 - E(\Cube_d) \sim \frac{\sqrt \pi}{\sqrt d}.$$
\end{proposition}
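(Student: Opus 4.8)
The plan is to compute the sprawl of the cube directly from the integral formula $E(\Cube_d)=\int_{L^2}\|\X-\Y\|_\infty\,d\muL^2$, exploiting the fact that the cone measure on the boundary of a cube and the $\ell^\infty$ metric are both very tractable. First I would fix one point $\X$ on a face of the cube (say the face $x_d=1$), which by symmetry costs nothing, and parametrize $\X=(\X_1,\dots,\X_{d-1},1)$ with $\X_1,\dots,\X_{d-1}$ uniform in $[-1,1]^{d-1}$ — this is exactly how the cone measure restricts to a face. The second point $\Y$ is then distributed according to cone measure on the whole perimeter; I would split this into the contribution of each of the $2d$ faces, and on each face use the analogous uniform parametrization. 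The key simplification is that $\|\X-\Y\|_\infty=\max_i|\X_i-\Y_i|$, and the maximum of $d$ essentially independent coordinate-differences can be handled by the standard order-statistic trick: $\mathbb E[\max]=\int_0^2 \Pr[\max\ge t]\,dt$, and $\Pr[\max\ge t]=1-\prod_i\Pr[|\X_i-\Y_i|<t]$.

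The next step is bookkeeping on the distribution of a single coordinate difference $|\X_i-\Y_i|$. When neither $\X$ nor $\Y$ has its $i$th coordinate pinned to $\pm1$, both are uniform on $[-1,1]$, so $|\X_i-\Y_i|$ has the triangular density $\frac{2-t}{4}$ on $[0,2]$ with CDF $\frac{t(4-t)}{8}$ — wait, more precisely $\Pr[|\X_i-\Y_i|<t]=1-\frac{(2-t)^2}{4}=\frac{t(4-t)}{4}$ on $[0,2]$. When $\Y$ lies on the face $y_i=\pm1$ (probability $\tfrac1{2d}$ for each of the two signs), the $i$th coordinate of $\Y$ is fixed and $\X_i$ is uniform, giving $|\X_i-\Y_i|$ uniform on $[0,2]$ or on a shifted interval; one has to track these cases. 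Since $\X$ is on the face $x_d=1$, the $d$th coordinate is always a special case. Collecting terms, $\Pr[\|\X-\Y\|_\infty\ge t]$ becomes a sum of a few products of powers of elementary functions of $t$, and $E(\Cube_d)=\int_0^2$ of that. I would expect the $4^d d!^2/(2d)! = 1/\binom{2d}{d}$ term to emerge from an integral of the form $\int_0^1 (\text{something like }(1-u^2)^{?})\,du$ or a Beta-function evaluation after the $(2-t)^2/4$ factors are expanded and integrated — this is the combinatorial heart of the formula.

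The main obstacle is the careful case analysis of which coordinates are ``pinned'' and getting all the constants right: there are genuinely several regimes (the coordinate $d$ which is pinned for $\X$ always; the coordinate of the face $\Y$ sits on; all the rest), and the interaction of the shifted uniform distributions with the truncation at $t=2$ must be handled without slippage. Once the closed form $E(\Cube_d)=\frac{2d+2}{d}-\frac{2d+1}{2d^2}\binom{2d}{d}^{-1}$ is in hand, the asymptotics are routine: $\binom{2d}{d}\sim 4^d/\sqrt{\pi d}$ by Stirling, so the subtracted term is $\sim \frac{2d}{2d^2}\cdot\frac{\sqrt{\pi d}}{4^d}\cdot 4^d\cdots$ — rather, $\frac{2d+1}{2d^2}\binom{2d}{d}^{-1}\sim \frac1d\cdot\frac{\sqrt{\pi d}}{4^d}\cdot 4^d$ is wrong dimensionally; correctly $\binom{2d}{d}^{-1}\sim\sqrt{\pi d}\,4^{-d}\cdot 4^d$? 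No: the subtracted term is simply $O(1/d)$ times $\binom{2d}{d}^{-1}$, and since $\binom{2d}{d}^{-1}\to0$ exponentially while $\frac{2d+2}{d}\to2$, one sees $E(\Cube_d)\to2$; for the sharp rate one writes $\frac{2d+2}{d}=2+\frac2d$ and checks that this $\frac2d$ is the dominant correction, giving $2-E(\Cube_d)\sim\frac{2}{d}$ — but the claimed rate is $\sqrt\pi/\sqrt d$, so in fact the $\frac2d$ term must cancel against a $\frac2d$ from the exact formula, leaving the $\binom{2d}{d}^{-1}\sim\sqrt{\pi d}/4^d$ contribution to dominate; I would double-check by re-deriving that $\frac{2d+2}{d}-\frac{2d+1}{2d^2}\binom{2d}{d}^{-1}$, expanded, has leading correction $\sqrt\pi/\sqrt d$, which pins down that the integral must actually produce a $\binom{2d}{d}^{-1}$ with a coefficient asymptotic to $d/2$, not $O(1/d)$ — so the delicate point is precisely getting that coefficient exactly right, and I would verify the whole formula against the known value $E(\Cube_2)=4/3$ as a sanity check before trusting the general computation.
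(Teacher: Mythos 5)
Your plan is essentially the paper's own proof: fix $\X$ on one face with the remaining coordinates uniform, split $\Y$ over the $2d$ faces (same, opposite, and adjacent), express the CDF of $\max_i|\X_i-\Y_i|$ as a product of the per-coordinate CDFs $\frac{t(4-t)}{4}$ (both free) and $\frac{t}{2}$ (one pinned), and integrate; the paper writes the expectation as $\int_0^2 r\,F'(r)\,dr$ and reduces to Beta-type integrals $\int_0^2 r^m(4-r)^n\,dr$ with double-factorial recursions, which is the computation you defer. The one substantive slip is in your asymptotics: $\frac{4^d\,d!^2}{(2d)!}$ is not $\binom{2d}{d}^{-1}$ but $4^d\binom{2d}{d}^{-1}\sim\sqrt{\pi d}$, so the subtracted term is $\sim\frac{1}{d}\cdot\sqrt{\pi d}=\sqrt{\pi/d}$, which dominates the $\frac{2}{d}$ from $\frac{2d+2}{d}=2+\frac{2}{d}$ and directly gives the claimed rate $2-E(\Cube_d)\sim\sqrt{\pi}/\sqrt{d}$ with no cancellation needed.
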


\begin{proof}  Let $x_i$ and $y_i$ be independently distributed uniformly on the 
interval $I=[-1,1]$.
We will use these random variables to 
compute the sprawl for $\Cube_d$, which we identify with the $(d-1)$-complex
in $\R^d$ with vertices $(\pm 1,\ldots, \pm 1)$.
To fix notation:
$\Cube_1$ is a pair of points on the line and $\Cube_2$ is a square in the plane.  
$\Cube_d$ has $2d$ top-dimensional facets, each a copy of $I^{d-1}$.  Note that 
each facet is the locus of points satisfying $x_i=c$ for $c=\pm 1$.  It
has exactly one opposite face ($x_i=-c$), and all the others are adjacent since the defining equations
can be simultaneously satisfied.
For a point in $\R^d$ to be in $\Cube_d$, all coordinates must be in $I$, and 
at least one of its coordinates must be $\pm 1$.

We compute
$$P(|x_i-y_i|<r) = \frac{4r-r^2}4 \ ; \qquad   P(|1-y_i|)<r = \frac{r}2$$
by considering the uniform measure on the square $I^2$
and calculating the portion of the area between the lines $x-y=r$ and $y-x=r$ in the first case, 
and above the line $y=r$ in the second.  From this we get cumulative distribution functions
$$F_{\rm same}(r)=P(d_\infty(\X,\Y)<r : \X,\Y~\hbox{\rm on same face})
 = \frac{(4r-r^2)^{d-1}}{4^{d-1}};$$
$$F_{\rm adj}(r)=P(d_\infty(\X,\Y)<r : \X,\Y~\hbox{\rm on adjacent faces}) 
=  \frac{(4r-r^2)^{d-2}}{4^{d-2}} \left(\frac{r}{2} \right)^2  .$$

To find expectations, we integrate $\int_0^2 r F'(r)\ dr$.  

The $d$-cube has $2d$ faces, so if $\X$ is placed randomly, then the probability that 
$\Y$ is on the same face or on the opposite face is $1/2d$ in each case, while all of the other $2d-2$ faces
are in the adjacent case.  Recalling that the distance between any two points on opposite faces is $2$, 
we get 
$$E(\Cube_d) = \frac{1\cdot 2 + 1\cdot \int_0^2 r F'_{\rm same}(r)\ dr ) + (2d-2)\cdot \int_0^2 r F'_{\rm adj}(r) \ dr }{2d}.$$ 
From this and some algebraic manipulation we derive
$$E(\Cube_d)=\frac 1d + \frac{d-1}{4^{d-1}d}
\left[   2 \int_0^2 r^{d-1}(4-r)^{d-2}\ dr + (d-1) \int_0^2 r^d(4-r)^{d-2}\ dr 
+ (2-d)\int_0^2 r^{d+1}(4-r)^{d-3}\ dr   \right] .$$

Let's let $I_{m,n}=\int_0^2 r^m(4-r)^n\ dr$.  Integration by parts and some further manipulations will give recursive formulas, for instance
$$I_{n,n}= 2^2\cdot \frac{2n}{2n+1} I_{n-1,n-1},$$
which simplifies to $I_{n,n} = 2^{2n+1} \frac{(2n)!!}{(2n+1)!!}$ since $I_{0,0}=2$.

The $I_{n+1,n}$, $I_{n+2,n}$, and $I_{n+4,n}$ are derived similarly, from which we find
$$E(\Cube_d) = 2- \left( 2+\frac 1d \right)\frac{(2d-2)!!}{(2d-1)!!} + \frac 2d .$$
Re-expressing the double factorials completes the proof.
\end{proof}

\subsection{The orthoplex}

\begin{proposition}\label{orthocalc}
The orthoplex is the limit shape for $\Z^d$ with its standard generating set $\pm\{\E_i\}$, 
and it induces the $\ell^1$ metric on $\R^d$.
The formula for the sprawl of the orthoplex is given in the following closed form:
$$E(\Orth_d)=\frac{3d-2}{2d-1}.$$
Thus, $E(\Orth_d)\to \frac 32$ as $d\to\infty$, with 
$$\frac 32 - E(\Orth_d) \sim \frac{1}{4d}.$$
\end{proposition}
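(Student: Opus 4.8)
The plan is to exploit the large symmetry group of $\Orth_d$ to turn the integral $E(\Orth_d)=\int_{L^2}\normL{\X-\Y}\,d\muL^2$ into an elementary probability computation. The orthoplex $\Orth_d=\partial B_1^d$ decomposes into $2^d$ simplicial facets $F_\varepsilon$, indexed by sign vectors $\varepsilon\in\{\pm1\}^d$, where $F_\varepsilon$ is the simplex spanned by $\{\varepsilon_i\E_i\}$; the facets meet only in sets of dimension $\le d-2$, hence of $\muL$-measure zero. Since all facets lie at the same distance from the origin, the cone over a subset $A\subset F_\varepsilon$ has volume proportional to the $(d-1)$-dimensional Lebesgue measure of $A$, so $\muL$ restricted to $F_\varepsilon$ is a normalized copy of Lebesgue measure, and by the symmetry group $\muL(F_\varepsilon)=2^{-d}$ for every $\varepsilon$. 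Thus the first step is the reduction: $E(\Orth_d)=\EE\,\normL{\X-\Y}$, where one draws $\varepsilon,\varepsilon'$ independently and uniformly from $\{\pm1\}^d$, then $\X$ uniformly on $F_\varepsilon$ and $\Y$ uniformly on $F_{\varepsilon'}$, independently.

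Next I would normalize using the sign and permutation symmetries: take $\varepsilon=(1,\dots,1)$, so $\X\ge 0$ with $\sum\X_i=1$; and only the number $k=\#\{i:\varepsilon'_i=-1\}$ of disagreeing coordinates matters, with $k\sim\mathrm{Binomial}(d,1/2)$. Writing $Z_i=|\Y_i|$, the vector $Z$ is uniform on the positive-orthant simplex and independent of $\X$, with $\Y_i=-Z_i$ on the $k$ flipped coordinates and $\Y_i=Z_i$ on the rest. On a flipped coordinate $\X_i-\Y_i=\X_i+Z_i\ge 0$, so
$$\normL{\X-\Y}=\sum_{i\ \mathrm{flipped}}(\X_i+Z_i)+\sum_{i\ \mathrm{unflipped}}|\X_i-Z_i|,$$
and since $\EE\X_i=1/d$, we get $\EE[\normL{\X-\Y}\mid k]=\tfrac{2k}{d}+(d-k)\,\EE|\X_1-Z_1|$.

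The only genuine computation is $\EE|\X_1-Z_1|$ for $\X_1,Z_1$ i.i.d.\ with the law of a single coordinate of a uniform point on the $(d-1)$-simplex, which is $\mathrm{Beta}(1,d-1)$, i.e.\ $P(\X_1>t)=(1-t)^{d-1}$ on $[0,1]$. Using $|\X_1-Z_1|=\X_1+Z_1-2\min(\X_1,Z_1)$ together with $P(\min>t)=(1-t)^{2d-2}$ gives $\EE\min(\X_1,Z_1)=\tfrac1{2d-1}$, hence $\EE|\X_1-Z_1|=\tfrac2d-\tfrac2{2d-1}=\tfrac{2(d-1)}{d(2d-1)}$. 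Averaging over $k$ with $\EE k=d/2$ then yields
$$E(\Orth_d)=\frac{2}{d}\cdot\frac{d}{2}+\frac{d}{2}\cdot\frac{2(d-1)}{d(2d-1)}=1+\frac{d-1}{2d-1}=\frac{3d-2}{2d-1},$$
and the rate $\tfrac32-E(\Orth_d)=\tfrac{1}{2(2d-1)}\sim\tfrac1{4d}$ is immediate. The probabilistic reductions and the $\mathrm{Beta}$/order-statistic integral are routine; the step I expect to require the most care is the first one — verifying that the cone measure is exactly the equal-weight, facetwise-uniform measure — since everything downstream relies on that identification.
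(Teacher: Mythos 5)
Your proposal is correct, and at its core it is the same computation as the paper's: both exploit coordinatewise linearity of $\|\X-\Y\|_1$ to reduce everything to the expected absolute difference of single coordinates, whose marginal law (your $\mathrm{Beta}(1,d-1)$ on each facet) is exactly the paper's density proportional to $(1-|x_1|)^{d-2}$ on $[-1,1]$. The only differences are organizational — you justify the facetwise-uniform cone measure explicitly and evaluate the final expectation via the $\min$/order-statistic identity and binomial averaging over sign agreements, where the paper computes the iterated integral $2d(d-1)^2\int_0^1 x(1-x)^{d-2}\int_0^x(1-y)^{d-2}\,dy\,dx$ directly — and both routes land on $\frac{3d-2}{2d-1}$.
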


\begin{proof}
First note that by symmetry, the expectation of $\| \X-\Y \|_1$ is equal to 
$d$ times the expectation of $|x_1-y_1|$.  Thus 
$$E(\Orth_d)= d\int_{I^2} |x_1-y_1|\ d\mu(x_1)\ d\mu(y_1),$$
where $d\mu$ is the measure induced by $\mu$ on a single coordinate 
axis of $\R^d$.  That measure is given by 
$$d\mu(x_1) = \frac{(1-|x_1|)^{d-2}}{(d-2)!}\ dx_1,$$
as can be verified by considering how much volume the orthoplex has at height $x_1$.
We can renormalize to a probability measure by taking $\nu = \frac{(d-1)!}2\mu$, 
so that $\int_{{\Orth_d}^2} d\nu^2 = 1$.
Thus we are calculating 
$$E(\Orth_d) = d \int_{I^2} |x_1-y_1| \ d\nu^2 
= \frac{d(d-1)^2}4 \int_{I^2} |x-y| \cdot (1-|x|)^{d-2}(1-|y|)^{d-2}\ dx\ dy.$$
But again by symmetry, this is just 
$$2d(d-1)^2 \int_{x=0}^1 x(1-x)^{d-2}\int_{y=0}^x (1-y)^{d-2}dy \ dx.$$
Evaluating in $y$ and then performing light manipulation gives us
$$2d(d-1)\left[  \int_0^1 x(1-x)^{d-2}\ dx - \int_0^1 x(1-x)^{2d-3}\ dx \right] 
= 2d(d-1) \left[ \frac{1}{(d-1)d} - \frac{1}{(2d-2)(2d-1)} \right] = \frac{3d-2}{2d-1},$$
as desired.
\end{proof}

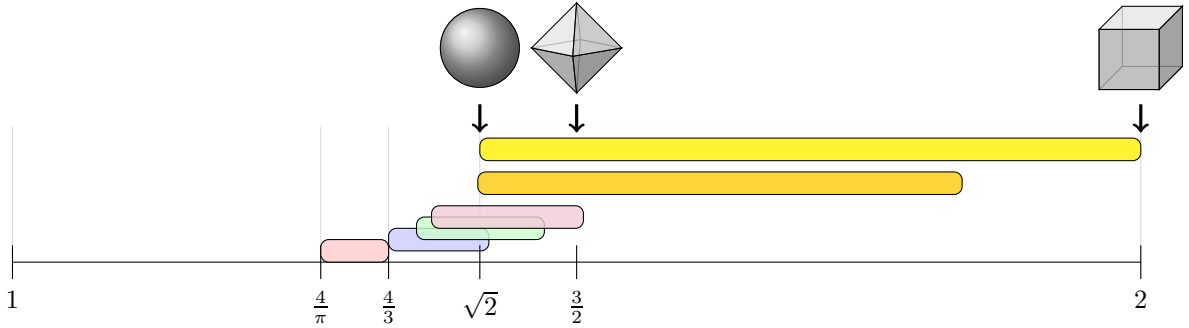
\begin{figure}[ht]
\begin{center}
\begin{tikzpicture}[scale=15]

  \foreach \x in {1,4/pi,4/3,1.41413562,2}
    \draw[gray!30] (\x,-0.015) -- (\x,0.12);
  \draw (1,0) -- (2,0);
  \draw (1,0) +(0,-0.015)
    node [below=1pt] {$1$}
    -- +(0,0.015);
  \draw (2,0) +(0,-0.015)
    node [below=1pt] {$2$}
    -- +(0,0.015);
  \draw (4/pi,0) +(0,-0.015)  
    node [below=1pt] {$\frac 4\pi$}
    -- +(0,0.015);
  \draw (4/3,0) +(0,-0.015) 
    node [below=1pt] {$\frac 43$}
    -- +(0,0.015);
  \begin{scope}
      \draw [fill, fill=red!20!white, rounded corners=3pt, fill opacity=.8]
      (4/pi,0) rectangle (4/3,0.02);
    \draw [fill, fill=blue!20!white, rounded corners=3pt, fill opacity=.8]
      (4/3,0.01) rectangle (64/45,0.03);
    \draw [fill, fill=green!20!white, rounded corners=3pt, fill opacity=.8]
      (64/15/pi,0.02) rectangle (103/70,0.04);
    \draw [fill, fill=purple!20!white, rounded corners=3pt, fill opacity=.8]
      (48/35,0.03) rectangle (2372/1575,0.05);
    \draw [fill, fill=yellow!80!red, rounded corners=3pt, fill opacity=.8]
      (1.412429119,0.06) rectangle (1.841645585,0.08);
    \draw [fill, fill=yellow, rounded corners=3pt, fill opacity=.8]
      (1.414213562,0.09) rectangle (2,0.11);
\draw [very thick, ->] (1.414213562,.14) -- (1.414213562,.115);
\draw [very thick, ->] (2,.14) -- (2,.115);

   \shadedraw [shading=ball,gray,ball color=gray!50]
      (1.414213562,0.19) circle (0.035cm);
   \draw       (1.414213562,0.19) circle (0.035cm);

\draw [very thick, ->] (3/2,.14) -- (3/2,.115);
  \draw (3/2,0) +(0,-0.015)
    node [below=1pt] {$\frac 32$}
    -- +(0,0.015);
\begin{scope}[xshift=3cm/2,yshift=0.19cm,scale=.04]
\draw (-1,0,0) -- (-.3,-.2,-1) -- (1,0,0);
\draw (0,-1,0) -- (-.3,-.2,-1) -- (0,1,0);
\draw [fill = gray!80, fill opacity = .8] (-1,0,0) -- (0,-1,0)
  -- (.3,.2,1) -- cycle;
\draw [fill = gray!20, fill opacity = .8] (-1,0,0) -- (0,1,0)
  -- (.3,.2,1) -- cycle;
\draw [fill = gray, fill opacity = .8] (1,0,0) -- (0,-1,0)
  -- (.3,.2,1) -- cycle;
\draw [fill = gray!50, fill opacity = .8] (1,0,0) -- (0,1,0)
  -- (.3,.2,1) -- cycle;
\end{scope}

\begin{scope}[xshift=2cm,yshift=0.19cm,scale=.08/3]
\draw (1,-1,-1) -- (-1,-1,-1);
\draw (-1,-1,1) -- (-1,-1,-1);
\draw (-1,1,-1) -- (-1,-1,-1);
\draw
  (-1,1,1) -- (1,1,1) -- (1,-1,1) -- (-1,-1,1) -- cycle;
\draw
  (1,1,1) -- (1,-1,1) -- (1,-1,-1) -- (1,1,-1) -- cycle;
\draw
  [fill = gray!20, fill opacity = .8]
  (-1,1,-1) -- (-1,1,1) -- (1,1,1) -- (1,1,-1) -- cycle;
\draw
  [fill = gray!60, fill opacity = .8]
  (-1,1,1) -- (1,1,1) -- (1,-1,1) -- (-1,-1,1) -- cycle;
\draw
  [fill = gray!90, fill opacity = .8]
  (1,1,1) -- (1,-1,1) -- (1,-1,-1) -- (1,1,-1) -- cycle;
\end{scope}
  \end{scope}
  \draw (1.41421356,0) +(0,-0.015)
    node [below=1pt] {$\sqrt{2}$}
    -- +(0,0.015);

\end{tikzpicture}
\end{center}
\caption{Ranges of sprawls: $\left[ E(\Sphere_d),E(\Cube_d)  \right]$ is shown for 
$d=2,3,4,5,100,\infty$.}\label{Zdnumberline}
\end{figure}

\subsection{The range of sprawls and the Mahler conjecture}

By rational approximation of convex bodies (as in the proof of Corollary~\ref{ratapprox}),
we find that a dense subset of the interval 
$\left[ E(\Sphere_d),E(\Cube_d)  \right]$ is contained in the set of values realized by groups, so
$$\left[ E(\Sphere_d),E(\Cube_d)  \right] \subseteq \overline{\{E(\Z^d,S) : \hbox{\rm gensets} ~S\}}.$$

We conclude by conjecturing that this is everything.

\begin{conjecture}[Sprawl Conjecture]\label{sprawl-range}     
The sphere and the cube are the extremes for the sprawl.  That is,
$$\{E(L) : ~\hbox{\rm perimeters}~ L\subset\R^d \} = 
\left[ E(\Sphere_d),E(\Cube_d)  \right].$$
\end{conjecture}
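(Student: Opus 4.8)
The plan is to reduce Conjecture~\ref{sprawl-range} to two sharp inequalities, $E(\Sphere_d)\le E(L)$ and $E(L)\le E(\Cube_d)$ for every perimeter $L\subset\R^d$, together with the characterization of their equality cases (ellipsoids for the first, parallelepipeds for the second). Granting these, the range is exactly $[E(\Sphere_d),E(\Cube_d)]$: the dense-image argument of Corollary~\ref{ratapprox} generalizes to give a continuous path of perimeters from the round sphere to the cube along which $E$ assumes every intermediate value, while the two bounds forbid anything outside. Since $E(L)=E(TL)$ for $T\in GL_d$, one may normalize $L$ to a convenient affine position (John position, or one diagonalizing a suitable inertia form), and by continuity of $E$ in the defining data one may first prove each inequality for polytopes and then pass to the limit, exactly as in Corollary~\ref{ratapprox} and in the hexagon computation. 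It is convenient throughout to use the probabilistic form of the statistic: if $X,Y$ are independent and uniform in the body $K$ bounded by $L$, then $X/\|X\|_L$ has the cone measure $\muL$ as its law, so that $E(L)=\mathbb{E}\,\| X/\|X\|_L - Y/\|Y\|_L \|_L$.

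For the lower bound $E(L)\ge E(\Sphere_d)$, which I expect to be the more tractable half --- it is the analogue of the \emph{proven} Blaschke--Santal\'o inequality in the Mahler circle of ideas --- the natural approach is symmetrization: one seeks an operation $L\mapsto L^{*}$ on perimeters that does not increase $E$ and that iterates to the round sphere. Steiner symmetrization of $K$ is the first candidate, Minkowski symmetrization $K\mapsto\tfrac12(K+RK)$ a second (the latter interacting linearly with support functions). The \textbf{crux of this half}, and the reason the lower bound is not already a theorem, is that the cone measure $\muL$ and the norm $\normL{\cdot}$ depend on $L$ in an entangled, non-local way, so that neither symmetrization is visibly $E$-monotone; establishing the required monotonicity lemma --- or identifying the correct hybrid symmetrization --- is the main technical obstacle here. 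As partial progress one can: (i) note for free the baseline bound $E(L)\ge 1$, since $2=\|2U\|_L\le\|U+V\|_L+\|U-V\|_L$ pointwise while $U+V$ and $U-V$ have the same law when $U,V\sim\muL$ are independent; (ii) compute the first and second variation of $E$ under convexity-preserving perturbations and verify that ellipsoids are the unique critical points and are local minima; and (iii) settle $d=2$ in full, since the cutline algorithm reduces $E$ on polygons to a semialgebraic optimization that can be pushed from hexagons (already done) to polygons with an arbitrary fixed number of sides, and then to all plane perimeters by approximation.

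The upper bound $E(L)\le E(\Cube_d)$ I expect to be genuinely hard, of Mahler-conjecture type, and this is \textbf{the main obstacle overall}: the extremal body is a parallelepiped --- more generally one should watch for Hanner polytopes --- and this kind of combinatorial rigidity is exactly what obstructs the companion volume problem. I would therefore attack it in the pattern of the known partial results on Mahler: reduce to polytopes; prove the bound first for \emph{unconditional} bodies by adapting Saint-Raymond's argument to the functional $E$; then extend to zonotopes and to bodies admitting a Hanner-type decomposition; and independently finish $d=2$ through the cutline optimization above (the hexagon theorem already yields $d=2$ for $|S|\le 6$). In parallel one should try to turn the Mahler analogy into an actual implication, e.g.\ a lower bound for $2-E(L)$ in terms of the volume product $\vol(K)\vol(K^{\circ})$, which would let proven cases of Mahler feed directly into the sprawl bound; whether such an inequality holds is itself of interest, and if it does not, the unconditional-body route is the realistic path. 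Finally, the equality statements in both bounds should emerge from tracking equality through the symmetrization (respectively variational) arguments, with the expected conclusions that $E(L)=E(\Sphere_d)$ forces $L$ affinely round and $E(L)=E(\Cube_d)$ forces $L$ affinely a cube.
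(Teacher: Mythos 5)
The statement you are addressing is labeled a \emph{conjecture} in the paper, and the paper does not prove it: it only supplies evidence (the hexagon theorem settling $d=2$ for $|S|\le 6$, the exact values for $\Sphere_d$, $\Cube_d$, $\Orth_d$, the $d\to\infty$ asymptotics via Arias-de-Reyna--Ball--Villa and the trivial bound $E\le 2$, and the analogy with Mahler volume). Your submission is, accordingly, not a proof but a research program, and to your credit you say so: you flag the two missing lemmas yourself. The genuine gaps are exactly where you locate them. For the lower bound, you need a symmetrization (Steiner, Minkowski, or a hybrid) under which $E$ is monotone and which converges to the ellipsoid; no such monotonicity is established here or in the paper, and it is not routine, because $\muL$ and $\normL{\cdot}$ both vary with $L$ and the integrand couples them nonlocally --- a Steiner symmetrization rearranges the cone measure in a way that has no evident pointwise comparison with the distance functional. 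For the upper bound you correctly identify Mahler-type combinatorial rigidity as the obstruction, and neither the unconditional-body reduction nor the proposed inequality bounding $2-E(L)$ by the volume product is proved or even known to be true. So the proposal cannot be accepted as a proof; the conjecture remains open after it.

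That said, the scaffolding you build around the gaps is sound and consistent with the paper. The ``achievability'' half (that every value in $[E(\Sphere_d),E(\Cube_d)]$ is attained by some perimeter, and densely by group generating sets) does follow from continuity of $E$ along a path of perimeters plus the rational-approximation argument of Corollary~\ref{ratapprox}, exactly as the paper asserts; the only open content is the pair of inequalities. Your item (i) is a correct and genuinely new observation worth keeping: writing $E(L)=\EE\,\normL{U-V}$ for independent $U,V$ with law $\muL$, central symmetry gives $U+V$ and $U-V$ the same law, so $2=\EE\,\normL{2U}\le\EE\,\normL{U+V}+\EE\,\normL{U-V}=2E(L)$, hence $E(L)\ge 1$ --- which strictly improves the paper's Proposition that $E(L)>\frac12$. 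But it falls well short of the conjectured sharp bound $E(L)\ge E(\Sphere_d)$, and the equality characterizations you promise (ellipsoids, parallelepipeds) are not needed for the statement as written and are in any case unsupported. In short: a reasonable plan, correctly calibrated about its own difficulty, but with both central inequalities unproven.
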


This conjecture would complete the description 
for free abelian groups of  the dependence of this curvature statistic on the generating set, showing
the values to be ``pinched" as in Figure~\ref{Zdnumberline}.

A similar conjecture could be formulated for the balls instead of the spheres:
consider the average distance statistic for convex, centrally symmetric $\Omega\subset\R^d$
defined by
$$\AD(\Omega):= \frac{\int_{\Omega^2} \|\X-\Y\|_\Omega \ d\vol^2}{(\vol\Omega)^2}.$$
Here, it is known (by the Brascamp-Lieb-Luttinger inequality \cite[Thm 1]{gluskin-milman})
that $\AD$ is minimized by (round) balls and ellipsoids, but the question of verifying that it 
is maximized by cubes is open.

Some evidence for the Sprawl Conjecture can be found in the high-dimensional asymptotics.
Because $E\le 2$ always, it is immediate that 
$$\lim_{d\to\infty}\sup\{E(L)\} = \lim_{d\to\infty} E(\Cube_d)=2.$$
Arias-de-Reyna, Ball, and Villa consider $\AD(\Omega)$ and 
prove that for almost all pairs of points in $\Omega\times\Omega$, the distance 
is greater than $\sqrt{2}(1-\epsilon)$~\cite[Thm 1]{ball}.  As they note,
the points in the ball become concentrated in its boundary as $d\to\infty$.
This shows that the $E(\Sphere_d)$ is a lower bound
for sprawl asymptotically, i.e., 
$$\lim_{d\to\infty}\inf\{E(L)\} = \lim_{d\to\infty} E(\Sphere_d)=\sqrt 2.$$

The Sprawl Conjecture resembles another well-studied problem in convex geometry.  
For a convex, centrally symmetric body $\Omega$, define its {\em polar body}
by 
$$\Omega^\circ := \{  \X\in \R^d : \X\cdot \Y \le 1 \quad \forall \Y\in\Omega  \}.$$
Thus for instance, the sphere is its own polar body in every dimension, 
$(\Omega^\circ)^\circ=\Omega$, and $(\Orth_d)^\circ=\Cube_d$.
The {\em Mahler volume} of $\Omega$ is defined to be 
$M(\Omega)=\vol(\Omega)\cdot\vol(\Omega^\circ)$.
Let us also say that for any set $A=-A$, we write 
$M(A)$ for the Mahler volume of the convex hull of $A$.
Then, just as for the sprawl, this is a statistic that is continuous in $\Omega$ and 
invariant under linear transformations; 
it has been described as measuring the ``roundness" of the convex body.  
Mahler conjectured in 1939 that the extremes in every dimension were realized by the
sphere and the cube.   Santal\'o proved in 1949 that the spheres did indeed realize
the upper bound on Mahler volume, but the lower bound is still an open problem,
despite some interesting recent progress by Kuperberg and others.  

Above, we have staked out the point of view 
that, like the Mahler volume and other affine isoperimetric invariants, 
sprawl is measuring a quality of roundness versus pointiness of the shape $L$.
Inspecting the estimates for sprawls of regular polygons derived after Proposition~\ref{polygons}
 shows something surprising:
there is no point after which sprawl decreases monotonically as the number of sides in the polygon 
increases.  
Thus, regular polygons with $4k-2$ sides are a bit ``rounder" than regular polygons with $4k$ sides 
(for all $k\ge 4$), even
though they have fewer sides.
On the other hand, as measured by Mahler volume the roundness of regular polygons increases monotonically
in the number of sides.  

Finally, we note that the average distance between two points on the round sphere
is precisely equal to the constant $\gamma_n$ that Kuperberg uses to 
state the inequality in his \cite[Corollary 1.6]{kuperberg}, where it is described 
as ``a monotonic factor that begins at $4/\pi$ and converges to $\sqrt 2$."  
Recognizing the geometric meaning of this 
constant allows his result  to be rephrased  as
$$M(\Omega) \ge   (\textstyle \frac{\pi}{4})^d \cdot E(\Sphere_d) \cdot M(\Cube_d).$$
The fact that this general inequality for Mahler volume should 
be so simply stated involving the sprawl is, we hope, intriguing.


\bibliography{sprawl}

\begin{thebibliography}{10}

\bibitem{ball}
{\sc J.~Arias-de Reyna, K.~Ball, and R.~Villa}, {\em Concentration of the
  distance in finite-dimensional normed spaces}, Mathematika, 45 (1998),
  pp.~245--252.

\bibitem{burago}
{\sc D.~Y. Burago}, {\em Periodic metrics}, in Representation theory and
  dynamical systems, vol.~9 of Adv. Soviet Math., Amer. Math. Soc., Providence,
  RI, 1992, pp.~205--210.

\bibitem{cannon}
{\sc J.~W. Cannon}, {\em Almost convex groups}, Geom. Dedicata, 22 (1987),
  pp.~197--210.

\bibitem{cle-tab}
{\sc S.~Cleary and J.~Taback}, {\em Thompson's group {$F$} is not almost
  convex}, J. Algebra, 270 (2003), pp.~133--149.

\bibitem{coornaert}
{\sc M.~Coornaert}, {\em Mesures de {P}atterson-{S}ullivan sur le bord d'un
  espace hyperbolique au sens de {G}romov}, Pacific J. Math., 159 (1993),
  pp.~241--270.

\bibitem{limit-measure}
{\sc M.~Duchin, S.~Leli\`evre, and C.~Mooney}, {\em The geometry of spheres in
  free abelian groups}.
\newblock Preprint, arXiv:1004.0053.

\bibitem{dr}
{\sc M.~Duchin and K.~Rafi}, {\em Divergence of geodesics in {T}eichm\"uller
  space and the mapping class group}, Geom. Funct. Anal., 19 (2009),
  pp.~722--742.

\bibitem{eld-her}
{\sc M.~Elder and S.~Hermiller}, {\em Minimal almost convexity}, J. Group
  Theory, 8 (2005), pp.~239--266.

\bibitem{efw}
{\sc A.~Eskin, D.~Fisher, and K.~Whyte}, {\em Quasi-isometries and rigidity of
  solvable groups}, Pure Appl. Math. Q., 3 (2007), pp.~927--947.

\bibitem{gersten2}
{\sc S.~M. Gersten}, {\em Divergence in {$3$}-manifold groups}, Geom. Funct.
  Anal., 4 (1994), pp.~633--647.

\bibitem{gersten1}
\leavevmode\vrule height 2pt depth -1.6pt width 23pt, {\em Quadratic divergence
  of geodesics in {${\rm CAT}(0)$} spaces}, Geom. Funct. Anal., 4 (1994),
  pp.~37--51.

\bibitem{gluskin-milman}
{\sc E.~Gluskin and V.~Milman}, {\em Randomizing properties of convex
  high-dimensional bodies and some geometric inequalities}, C. R. Math. Acad.
  Sci. Paris, 334 (2002), pp.~875--879.

\bibitem{her-mei}
{\sc S.~Hermiller and J.~Meier}, {\em Measuring the tameness of almost convex
  groups}, Trans. Amer. Math. Soc., 353 (2001), pp.~943--962 (electronic).

\bibitem{kap-leeb}
{\sc M.~Kapovich and B.~Leeb}, {\em {$3$}-manifold groups and nonpositive
  curvature}, Geom. Funct. Anal., 8 (1998), pp.~841--852.

\bibitem{kuperberg}
{\sc G.~Kuperberg}, {\em From the {M}ahler conjecture to {G}auss linking
  integrals}, Geom. Funct. Anal., 18 (2008), pp.~870--892.

\bibitem{olliv1}
{\sc Y.~Ollivier}, {\em Ricci curvature of {M}arkov chains on metric spaces},
  J. Funct. Anal., 256 (2009), pp.~810--864.

\bibitem{oll-vil}
{\sc Y.~Ollivier and C.~Villani}, {\em A curved brunn-minkowski inequality on
  the discrete hypercube}.
\newblock Preprint, arXiv:1011.4779.

\bibitem{woess}
{\sc W.~Woess}, {\em Lamplighters, {D}iestel-{L}eader graphs, random walks, and
  harmonic functions}, Combin. Probab. Comput., 14 (2005), pp.~415--433.

\end{thebibliography}
\bibliographystyle{siam}

\end{document}